\documentclass[11pt]{article}

\usepackage{bm}
\usepackage{amsfonts}
\usepackage{amssymb,amsmath,amsthm, amsfonts}
\usepackage{xcolor}
\usepackage{mathrsfs}
\usepackage{enumitem}

\def\sqr#1#2{{\vcenter{\vbox{\hrule height.#2pt
              \hbox{\vrule width.#2pt height#1pt \kern#1pt \vrule width.#2pt}
          \hrule height.#2pt}}}}
\def\sqr#1#2{{\vcenter{\vbox{\hrule height.#2pt
              \hbox{\vrule width.#2pt height#1pt \kern#1pt \vrule width.#2pt}
              \hrule height.#2pt}}}}
\def\3n{\negthinspace \negthinspace \negthinspace }
\def\2n{\negthinspace \negthinspace }
\def\1n{\negthinspace }

\def\={\buildrel \triangle \over =}

\def\liminf{\mathop{\underline{\rm lim}}}

\def\sup{\mathop{\rm sup}}

\def\({\Big (}
\def\){\Big )}
\def\[{\Big[}
\def\]{\Big]}
\def\be{\begin{equation}}

\def\ee{\end{equation}}

\def\square#1{\vbox{\hrule\hbox{\vrule height#1%
     \kern#1\vrule}\hrule}}
\def\rectangle#1#2{\vbox{\hrule\hbox{\vrule height#1%
     \kern#2\vrule}\hrule}}

\font\tenbb=msbm10 \font\sevenbb=msbm7 \font\fivebb=msbm5

\newfam\bbfam
\scriptscriptfont\bbfam=\fivebb \textfont\bbfam=\tenbb
\scriptfont\bbfam=\sevenbb

\newtheorem{lemma}{Lemma}[section]
\newtheorem{remark}{Remark}[section]

\newtheorem{theorem}{Theorem}[section]

\newtheorem{definition}{Definition}[section]
\newtheorem{proposition}{Proposition}[section]

\makeatletter
   
   \@addtoreset{equation}{section}
\makeatother

\begin{document}

\title{Stochastic Porous Media Equations With Nonhomogeneous Dirichlet Boundary Conditions}
\author{ Dan Goreac$^{1,2}$,\,\,Ioana Ciotir$^{3}$,\,\,Juan Li$^{2,4}$,\,\,Yufan Peng$^{2}$
\\
{$^1$\small \'Ecole d'actuariat, Universit\'e Laval, Qu\'ebec, QC, G1V 0A6, Canada.}\\
{$^2$\small School of Mathematics and Statistics, Shandong University, Weihai, Weihai 264209, P.~R.~China.}\\
{$^3$\small Normandie University, INSA de Rouen Normandie, LMI (EA 3226-FR CNRS 3335), }\\
\ \ \ \ \ \  {\small 76000 Rouen, France.}\\
{$^4$\small Research Center for Mathematics and Interdisciplinary Sciences, Shandong University,}\\
 \ \ \ \ \ \  {\small Qingdao 266237, P. R. China.}\\
{\footnotesize{\it E-mails:dan.goreac@univ-eiffel.fr,\,\ ioana.ciotir@insa-rouen.fr,\,\ juanli@sdu.edu.cn,\,\ yufanpeng@mail.sdu.edu.cn}}
}
\date{October 8, 2026}
\maketitle

\textbf{Abstract}.
We aim at studying the well-posedness of stochastic porous media equation problems under nonhomogeneous Dirichlet boundary. We obtain an unconventional homogeneous Dirichlet boundary stochastic porous medium equation by transformation under an additional assumption, and then use approximation problems to obtain the well-posedness properties of the equation with homogeneous Dirichlet boundary conditions under different initial conditions. Consequently, we obtain the corresponding solution to the original equation.

\textbf{Keywords}. Stochastic porous media; nonhomogeneous Dirichlet boundary; monotone operators; Yosida approximation

\section{Introduction}
In this paper we study nonlinear stochastic differential equations of the form
\begin{equation}\label{1.1}
\begin{cases}
dX(t)-\Delta\beta(X(t))dt=\sigma(X(t))dW(t),\quad\text{in}\ [0,+\infty)\times\mathcal{O},\\\\
X(t)=g(t),\quad\text{on}\ [0,+\infty)\times\partial\mathcal{O},\\\\
X(0)=x,\quad\text{in}\ \mathcal{O},
\end{cases}
\end{equation}
where $\mathcal{O}$ is an open bounded domain of $\mathbb{R}^d,\ d\geq 1$ with smooth boundary $\partial\mathcal{O}$, $\beta$ is a monotone function on $\mathbb{R}$, $W$ is a Wiener process and $\sigma=\sigma(X)$ is a suitable function to be made precise later on.

 The stochastic equation (\ref{1.1}) can be seen as an extension of deterministic equation when the forcing term  is replaced by a noise term. The interest of such questions for porous media type equations comes from geophysics and more precisely could be applied in the study and prevention of cliff erosion. According to the order of growth of the operator $\beta$, the porous media equation describes different phenomena going from slow diffusion, for over-unit orders, to fast diffusion, for sub-unit orders, and even super-fast diffusion for negative orders.

The stochastic porous media equation has been intensively studied recently in different frameworks. There is a vast literature on the subject and a complete overview would exceed the framework of this paper. Firstly, work was carried out in \cite{11, 13}, which respectively proved the existence of weak and strong solutions in specific situations. For more general cases see \cite{1}. Afterwards, the critical cases of fast and super-fast diffusion were discussed in \cite{19} and respectively \cite{superfast}, and a porous media Schrodinger type of equation was introduced and studied in \cite{schrodinger}. For an original perspective concerning uniqueness of the solution, see also \cite{francesco}. For the case of $\mathbb{R}^{d}$ see \cite{4} and \cite{reika}. For some recent developments involving a turbulence type of noise see \cite{div, stefan1,stefan2} and for the probabilistic representations see \cite{francesco2}. The above results are all studied under homogeneous Dirichlet boundary conditions. The case with Robin homogeneous boundary conditions was studies in \cite{neumann}. We treat the case of a nonhomogeneous Dirichlet boundary conditions. This causes significant changes in the solution space, resulting in the main difficulty in the proof. Since the solution space needs to be changed, we cannot directly reference previous results. We first translate the problem to a homogeneous case and then use Yosida approximations to consider an approximate equation, finally use approximation methods to obtain the solution of the homogeneous problem. After solving the homogeneous problem, we bring back the original problem to obtain the existence of strong solutions at $x \in L^2 $ and the existence of generalized solutions at $x\in H^{-1}(\mathcal O) $(The space $H^{-1}(\mathcal O)$ is introduced in Section 2).

More precisely, in this paper we study a stochastic porous medium equation on a smooth bounded domain $\mathcal O\subset\mathbb R^d$, $d\geq1$, with diffusion $\Delta\beta(X)$, noise term $\sigma(X)\,dW$, initial datum $x$, and prescribed nonhomogeneous Dirichlet boundary condition $X\big|_{[0, T]\times\partial\mathcal O}=g.$ The nonhomogeneous boundary condition is the main novelty addressed in this work. To deal with the boundary condition $g$ we consider an additional process $Y$ that attains the prescribed boundary value $g$ and has smooth properties. On setting $\bar{X}=X-Y$, we formulate a transformed problem with homogeneous boundary data for $\bar{X}$, interpreted according to the relevant notion of solution.

The transformed problem, whose nonlinearity depends on randomness and time via $Y$, requires estimates that incorporate the auxiliary process, rather than a direct application of existing results.

The organization of the paper is the following: we start with our introduction, in the second section we introduce various notations and spaces that will be used in the later section and then we will provide several lemmas that will be used later. In the third section, we will give the definition of strong solution and generalized solution, as well as an additional assumption. This assumption requires the existence of a $Y(t)$ that satisfies the boundary conditions and has sufficient other properties.

\section{Preliminaries}
Let $\mathcal{O}$ be a bounded open subset of $\mathbb{R}^{d}$. We assume that its boundary $\partial\mathcal{O}$ is sufficiently regular (at least of class $C^{2}$).\\
We shall introduce the following spaces:\\
$\bullet $ $L^{p}(\mathcal{O})=L^{p}$, $p\in[1,\infty],$  is the Banach space of all $p$-summable real-valued functions on $\mathcal{O}$ endowed with the usual norm $|\cdot|_{p}$. The inner product in the
Hilbert space $L^{2}(\mathcal{O})$  will be denoted by $\langle\cdot,\cdot\rangle_{2}$ .   \\
$\bullet $  $H^{k}(\mathcal{O})=H^{k}$, $k \in \mathbb{N}$ is the Sobolev space of all functions in $L^{2}$ whose distributional derivatives of order less than or equal to k belong to $L^{2}$. \\
$\bullet $ $H_{0}^{1}(\mathcal{O})=H_{0}^{1}$ is the set of all functions of $H^{1}$ that vanish on the boundary of $\mathcal{O}$. The norm in $H_{0}^{1}$ is denoted by $\|\cdot\|$ and given by    \\
\begin{equation*}
\|u\|_1:=\left(\int_{\mathbb{R}^d}|\nabla u(\xi)|^2d\xi\right)^{1/2}.
\end{equation*}
$\bullet $ $\mathcal{D}^{'}({\mathcal{O}})$ denotes the space of Schwartz distributions on $\mathcal{O}$.      \\
$\bullet $  $H^{-1}(\mathcal{O})=H^{-1}$ is the dual of $H_{0}^{1}(\mathcal{O})$. Its norm will be denoted by $\|\cdot\|_{-1}$ and the inner product by $\langle\cdot,\cdot\rangle_{-1}$.    \\
$\bullet $  $W^{k,p}(\mathcal{O})$  is the Sobolev space of all functions in $L^{p}$ whose
distributional derivatives of order less than or equal to $k$ belong to $L^{p}$.

Let $H$ be a separable Hilbert space and $p,\ q\in[1,+\infty]$ and let $(W_h)_{h\in\mathbb{N}}$
be a sequence of independent real $\mathcal{F}_{t}$-Brownian motions on a filtered probability space $(\Omega,\mathcal{F},\mathbb{P},(\mathcal{F}_t)_{t\geq 0})$ for some normal filtration $(\mathcal{F}_t)_{t\geq 0}$.\\
\noindent$\bullet $  $W$ is a cylindrical Wiener process in $H^{-1}$ to be used in the sequel.\\
\noindent$\bullet $  $L_W^q(0,T;L^p(\Omega,H))$ is the space of all $(\mathcal{F}_t)_{t\geq 0}$-progressively measurable processes $X$ such that
\begin{equation*}
\int_0^T(\mathbb{E}\left[|X(t)|^p\right])^\frac{q}{p}dt<\infty.
\end{equation*}
\noindent$\bullet $  $C_{W}([0,T];L^{p}(\Omega,H))$ is the space of all $H$-valued $(\mathcal{F}_t)_{t\geq 0}$-progressively measurable processes which are $p$ mean square continuous on $[0,T]$, that is such that
\begin{equation*}
\sup_{t\in[0,T]}\mathbb{E}\left[|X(t)|^p\right]<\infty.
\end{equation*}
\noindent$\bullet $  $L_{W}^{2}(\Omega;C([0,T];H))$ is the space of all $H$-valued $(\mathcal{F}_t)_{t\geq 0}$-progressively measurable processes which are continuous on $[0,T]$ and such that
\begin{equation*}
\mathbb{E}\left[\sup_{t\in[0,T]}|X(t)|^2\right]<\infty.
\end{equation*}

Let us consider an It\^{o}'s process with values in $H$ is a stochastic process $X(t)$, $t\in [0,T]$ of the form
\begin{equation}\label{2.3}
X(t)=x+\int_0^t b(s)ds+\int_0^t\sigma(s)dW(s),
\end{equation}
where $W$ is a cylindrical Wiener process in $H^{-1}$, $x\in H$, $b \in L_{\mathrm{W}}^{1}([0,T];L^{1}(\Omega,H))$ and $\sigma \in L_{W}^{2}([0,T];L^{2}(\Omega,\mathcal{L}_{2}(H))$. Here $\mathcal{L}_{2}(H)$ denotes the space of all Hilbert-Schmidt operators from $H$ to $H$. Then if $\varphi\in C^{2}(H)$ (i.e. twice continuously Fr\'{e}chet differentiable on H) the It\^{o}'s formula holds in the following form
\begin{equation}\label{2.4}
\begin{aligned}
\varphi(X(t))=&\varphi(x)+\int_{0}^{t}\langle b(s),D\varphi(X(s))\rangle ds+\frac{1}{2}\int_{0}^{t}\mathrm{Tr} [\sigma^{*}(s)D^{2}\varphi(X(s))\sigma(s)]ds\\
&+\int_{0}^{t}\langle D\varphi(X(s)),\sigma(s)dW(s)\rangle,
\end{aligned}
\end{equation}
where $D\varphi $ and $D^{2}\varphi $ represent the first and second derivatives of $\varphi$, respectively. Identity (\ref{2.4}) holds in $L_{W}^{2}(\Omega;C([0,T];H))$, P-a.s.

We note for further use that
\begin{equation}
\mathrm{Tr}\left[\sigma^*(s)D^2\varphi(X(s))\sigma(s)\right]=\sum_{k=1}^{\infty}\langle D^2\varphi(X(s))\sigma(s)g_k,\sigma(s)g_k\rangle,
\end{equation}\label{2.5}
where $\{g_{k}\}$ is an orthonormal basis of $H$.
Let $F\in L^{2}_{W}([0,T];L^2(\Omega,\mathcal{L}_2(H)))$ and set
\begin{equation*}
X(t):=\int^{t}_{0}F(s)dW(s),\ t\in[0,T].
\end{equation*}

Then $X$ belongs to $C_{W}([0,T];L^2(\Omega,H))$ and possesses a version which belongs
to $L^{2}_{W}(\Omega;C([0,T];H))$. Moreover, $X$ is a martingale and the following result holds:
\begin{proposition}{(Burkholder-Davis-Gundy)}
For arbitrary $p>0$ there exists a constant $c_{p}>0$, such that
\begin{equation}\label{2.6}
\mathbb{E}\left[\sup\limits_{t\leq T}\left|\int_0^tF(s)dW(s)\right|^p\right]\leq c_p\mathbb{E}\left[\left[\int_0^T\|F(s)\|_{\mathcal{L}_2(H)}^2ds\right]^{p/2}\right].
\end{equation}

By Doob's maximal inequality, we have
\begin{equation}\label{2.7}
\mathbb{E}\left[\sup\limits_{t\in[0,T]}\left|\int_0^tF(s)dW(s)\right|^2\right]\leq4\int_0^T\mathbb{E}\left[\|F(s)\|_{\mathcal{L}_2(H)}^2 \right] ds.
\end{equation}
\end{proposition}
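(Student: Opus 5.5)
The plan is to prove (\ref{2.7}) first, by Doob's maximal inequality combined with the Itô isometry. The process $X(t)=\int_0^t F(s)dW(s)$ is a continuous square-integrable $H$-valued martingale, as noted just before the statement. Consequently $t\mapsto |X(t)|$ is a nonnegative continuous submartingale, by Jensen's inequality for conditional expectations applied to the convex function $|\cdot|$. Doob's $L^2$ maximal inequality then gives
\[
\mathbb{E}\Big[\sup_{t\in[0,T]}|X(t)|^2\Big]\le 4\,\mathbb{E}\big[|X(T)|^2\big].
\]
The Itô isometry for cylindrical Wiener integrals identifies the right-hand side with $4\int_0^T\mathbb{E}\|F(s)\|^2_{\mathcal L_2(H)}ds$. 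I would establish the isometry first for elementary (step) integrands by expanding $W$ along an orthonormal basis $\{g_k\}$ and using independence of the $W_h$. It then extends to general $F$ by density in $L^2_W([0,T];L^2(\Omega,\mathcal L_2(H)))$. This disposes of (\ref{2.7}), and it is also the case $p=2$ of (\ref{2.6}), with $c_2=4$.

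For general $p>0$ in (\ref{2.6}), I would reduce to the scalar continuous-martingale BDG inequality. First, by localization, I stop at $\tau_n=\inf\{t:|X(t)|\ge n \text{ or } \int_0^t\|F\|^2_{\mathcal L_2}ds\ge n\}$, so that all quantities are bounded, and then pass to the limit by monotone convergence. For $p\ge 2$, I apply Itô's formula (\ref{2.4}) to $\varphi(x)=|x|^p$, which is $C^2$ when $p\ge2$. The drift part is bounded by $\frac{p(p-1)}{2}\int_0^t|X|^{p-2}\|F\|^2_{\mathcal L_2}ds$, using the trace expression for $D^2\varphi$ in terms of $\{g_k\}$. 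Taking expectations removes the stochastic integral, which is a true martingale after stopping. Bounding $|X|^{p-2}$ by the supremum and using Hölder with exponents $p/(p-2)$ and $p/2$ yields
\[
\mathbb{E}|X(t)|^p\le C_p\Big(\mathbb{E}\sup_s|X(s)|^p\Big)^{(p-2)/p}\Big(\mathbb{E}\Big[\int_0^T\|F\|^2ds\Big]^{p/2}\Big)^{2/p}.
\]
Combining this with Doob's $L^p$ inequality, $\mathbb{E}\sup|X|^p\le (p/(p-1))^p\,\mathbb{E}|X(T)|^p$, and dividing through, which is legitimate because everything is finite after localization, gives (\ref{2.6}) for $p\ge 2$.

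The main obstacle is the range $0<p<2$, where Doob's inequality fails for $p\le1$ and $|x|^p$ is not $C^2$. Here I would use Lenglart's domination inequality. The submartingale $|X|^2$ is dominated by the increasing process $A(t)=\int_0^t\|F\|^2_{\mathcal L_2}ds$ in the sense that $\mathbb{E}|X(\tau)|^2\le\mathbb{E}A(\tau)$ for every bounded stopping time $\tau$, which is the isometry of the first step applied to the stopped integrand. Lenglart's inequality then gives, for $0<q<1$,
\[
\mathbb{E}\Big[\sup_t|X(t)|^{2q}\Big]\le\frac{2-q}{1-q}\,\mathbb{E}\big[A(T)^q\big].
\]
Taking $q=p/2$ gives (\ref{2.6}) for all $0<p<2$, which completes the proposition.
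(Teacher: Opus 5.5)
The paper gives no proof of this proposition. It simply quotes the Burkholder--Davis--Gundy inequality and Doob's maximal inequality as classical facts. Your argument is a correct, self-contained derivation of both, by a standard route:
\begin{itemize}
\item For (\ref{2.7}) you combine Doob's $L^2$ inequality for the submartingale $|X(t)|$ with the It\^o isometry, which yields exactly the constant $4$.
\item For $p\ge 2$ you use It\^o's formula for $\varphi(x)=|x|^p$, which is indeed $C^2$ on $H$ with $\|D^2\varphi(x)\|\le p(p-1)|x|^{p-2}$. You then apply H\"older and Doob's $L^p$ inequality after localization.
\item For $0<p<2$ you use Lenglart's domination inequality. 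The domination $\mathbb{E}|X(\tau)|^2=\mathbb{E}A(\tau)$ follows from the isometry applied to $1_{[0,\tau]}F$.
\end{itemize}
Citing buys the paper brevity. Your derivation buys an explicit check that the $H$-valued cylindrical setting reduces to scalar tools, with explicit constants: $c_2=4$, and $c_p=\frac{4-p}{2-p}$ for $0<p<2$.

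Two small remarks. First, your opening sentence for general $p$ announces a reduction to the scalar BDG inequality, but you never use it. You prove the inequality directly, so that sentence should be dropped. Second, the case the paper actually relies on later is $p=1$. It is applied in (\ref{4.30}) and (\ref{4.46}) to the real-valued martingale $\int_0^t\langle X_\epsilon(s),\bar\sigma(X_\epsilon(s))dW(s)\rangle_{-1}$. That integrand is a Hilbert--Schmidt operator from the noise space into $\mathbb{R}$, not an element of $\mathcal{L}_2(H)$. Your Lenglart step is precisely what covers $p=1$, so it is not optional here. Your whole argument also goes through verbatim for integrands in $\mathcal{L}_2(U,V)$ with $U\neq V$, so it covers the form in which the paper uses the result. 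Your reading of $W$ as cylindrical on $H$, expanded along $\{g_k\}$, is also the consistent interpretation of the paper's somewhat mismatched setup (a Wiener process ``in $H^{-1}$'' paired with integrands in $\mathcal{L}_2(H)$).
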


\section{Assumptions}

For the below equation

\begin{equation}\label{3.1}
\begin{cases}
dX(t)=\Delta\beta(X(t))dt+\sigma(X(t))dW(t),\,\  t \in [0,T],\ \text{on } \mathcal{O},
\\X(0)=x\in H^{-1},
\\X(t)=g(t),\ \text{on}\ \Sigma =(0,T)\times \partial \mathcal{O}\times \Omega.
\end{cases}
\end{equation}
We assume that the following hypotheses are satisfied:

\textbf{(i)} $\beta:\mathbb{R}\to\mathbb{R}$ is monotonically increasing and $\beta(0)=0$. Moreover, there exists $K > 0$ such that
\begin{equation}\label{3.2}
|\beta(r)-\beta(s)|\leq K|r-s|,\quad\forall r,\ s\in\mathbb{R}.
\end{equation}

\textbf{(ii)}\begin{enumerate}[label=(\alph*)]\item $\sigma$ is Lipschitz from $H^{-1}$ into $\mathcal{L}_{2}(H^{-1})$, i.e. there exists $K_{1} > 0$ such that
\begin{equation}\label{3.3}
\|\sigma(x)-\sigma(y)\|_{\mathcal{L}_{2}(H^{-1},H^{-1})}\leq K_{1}\|x-y\|_{-1},\quad\forall x,\ y\in H^{-1},
\end{equation}
where $\mathcal{L}_2(H^{-1})$ denotes the space of all Hilbert-Schmidt operators in $H^{-1}$.

\item $\sigma$ is Lipschitzian from $L^{2}$ into $\mathcal{L}_{2}(H^{-1},L^{2})$, i.e. there exists $K_{2} > 0$ such that
\begin{equation}\label{3.4}
\|\sigma(x)-\sigma(y)\|_{\mathcal{L}_{2}(H^{-1},L^{2})}\leq K_{2}  |x-y |_{2},\quad\forall x,\ y\in L^{2},
\end{equation}
where $\mathcal{L}_2(H^{-1},L^2)$ denotes the space of all Hilbert-Schmidt operators from $H^{-1}$ to $L^{2}$.
\end{enumerate}
\begin{remark}
By assumption, we have
\begin{equation}\label{3.5}
\|\sigma(x)\|_{\mathcal{L}_{2}(H^{-1},H^{-1})}\leq\sigma_{1}+K_{1}\|x\|_{-1},\quad\forall x\in H^{-1},
\end{equation}
and
\begin{equation}\label{3.6}
\|\sigma(x)\|_{\mathcal{L}_{2}(H^{-1},L^{2})}\leq\sigma_{2}+K_{2} | x |_{2},\quad\forall x\in L^{2},
\end{equation}
where $\sigma_{2}=|\sigma(0)|_{\mathcal{L}_{2}(H^{-1},L^{2})}<\infty$ and $\sigma_{2}=|\sigma(0)|_{\mathcal{L}_{2}(H^{-1},L^{2})}<\infty$.
\end{remark}

\textbf{(iii)} Concerning the regularity of $g$, we take it smooth enough such that there exists a process
$$Y\in L_{W}^{2}(\Omega;C([0,T];H^1))\cap L^{2}(\Omega;L^{\infty}([0,T]\times\mathcal{O} )),$$
satisfying:
\begin{equation}\label{3.7}
\begin{cases}
Y(t)=g(t)\  \text{on}\ \Sigma ,\\
Y(0)=y\in L^{2}\ ,\\
dY(t)=b(t)dt.
\end{cases}
\end{equation}
Where $$b(t)\in L^{2}(0,T;L^{2}(\Omega;L^{2})),$$
and such that
$$\Delta\beta(Y(t))\in L^{2}(0,T;L^2(\Omega,L^2)).$$

\section{Approach}

In this work we are interested to prove existence and uniqueness of the solution in the following sense.

\begin{definition}
For $x\in H^{-1}$, a strong solution to (\ref{3.1}) on $[0,T]$ is a stochastic process $X$ which satisfies:
\begin{equation*}
\begin{aligned}
&X\in L_W^2(\Omega;C([0,T];H^{-1}))\cap L^2(0,T;L^2(\Omega;L^2));\\
&\beta(X)\in L^2(0,T;L^2(\Omega;H^1));\\
&X(t)=x+\int_0^t\Delta\beta(X(s))ds+\int_0^t\sigma(X(s))dW(s),\quad\forall t\in[0,T],\mathbb{P}\text{-a.s.}
\end{aligned}
\end{equation*}
It follows that a strong solution of (\ref{3.1}) is an It\^{o} process in $H^{-1}$.

For $x\in H^{-1}$ a generalized solution to (\ref{3.1}) on $[0,T]$ is a process X which belongs to $L_{W}^{2}(\Omega;C([0,T];H^{-1}))$, and such that there exists a sequence $\{x_n\}\subset L^2$ convergent to $x$ with the topology on $H^{-1}$ such that
\begin{equation*}
\lim\limits_{n\to\infty}X_n=X\quad\text{in}\ L_W^2(\Omega;C([0,T];H^{-1})),
\end{equation*}
where $X_n$ is the strong solution to (\ref{3.1}) with $x_n$ replacing $x$.

\end{definition}

Let us introduce
\begin{equation*}
\begin{cases}
\bar{X}(t)=X(t)-Y(t),\\
F(x)=\beta(x+Y(t))-\beta(Y(t)),\\
\bar{\sigma}(x)=\sigma(x+Y(t)),\\
f(t)=\Delta\beta(Y(t))-b(t).
\end{cases}
\end{equation*}
And we rewrite the equation above in the following form

\begin{equation}\label{3.8}
\begin{cases}
d\bar{X}(t)=\Delta F(\bar{X}(t))dt+f(t)dt+\bar{\sigma}(\bar{X}(t))dW(t),\ t\geq0,
\\ \bar{X}(0)=(x-y)\in H^{-1} ,
\\ \bar{X}(t)=0, \ \ \ \ \ \ \ \ \ \text{on}\ \Sigma.
\end{cases}
\end{equation}

From the previous assumption, it is easy to check that:

\textbf{(i)} $F:[0,T] \times \Omega\times \mathcal{O} \times \mathbb{R} \to\mathbb{R}$, $F(t,\omega,\xi,\cdot )$ is monotonically increasing and $F(t,\omega,\xi,0)=0$.
Moreover, there exists $K > 0$ such that $F$ is Lipschitz about $x$ uniformly in $(t,\omega,\xi)$, i.e.
\begin{equation}\label{3.9}
|F(t,\omega,\xi,r)-F(t,\omega,\xi,s)|\leq K|r-s|,\quad\forall r,\ s\in\mathbb{R}.
\end{equation}

\textbf{(ii)}
\begin{enumerate}[label=(\alph*)]
\item $\bar{\sigma}(t,\omega,\xi,\cdot )$\ is Lipschitz continuous from $H^{-1}$ into $\mathcal{L}_{2}(H^{-1})$, there exists $K_{1} > 0$ such that
\begin{equation}\label{3.10}
\|\bar{\sigma}(t,\omega,\xi,x)-\bar{\sigma}(t,\omega,\xi,y)\|_{\mathcal{L}_{2}(H^{-1},H^{-1})}\leq K_{1}\|x-y\|_{-1},\quad\forall x,\ y\in H^{-1},
\end{equation}
and we have
\begin{equation}\label{3.11}
\|\bar{\sigma}(t,\omega,\xi,x)\|_{\mathcal{L}_{2}(H^{-1},H^{-1})}\leq \sigma_{1}+K_{1}\| Y(t,\omega)\|_{-1}+K_{1}\|x\|_{-1},\quad\forall x\in H^{-1},
\end{equation}
let $\bar{\sigma}_{1}=\sigma_{1}+K_{1}\| Y(t,\omega)\|_{-1}$ and $\mathbb{E}[\bar{\sigma}^2_1]\leq 2\sigma^2_1+2 K_1\mathbb{E}[\sup\limits_{t\in [0,T]}\| Y(t,\omega)\|^2_{-1}]< \infty$.

\item $\bar{\sigma}(t,\omega,\xi,\cdot )$\ is Lipschitz continuous from $L^{2}$ into $\mathcal{L}_{2}(H^{-1},L^{2})$, there exists $K_{2} > 0$ such that
\begin{equation}\label{3.12}
\|\bar{\sigma}(t,\omega,\xi,x)-\bar{\sigma}(t,\omega,\xi,y)\|_{\mathcal{L}_{2}(H^{-1},L^{2})}\leq K_{2}  |x-y |_{2},\quad\forall x,\ y\in L^{2},
\end{equation}
and we have
\begin{equation}\label{3.13}
\|\bar{\sigma}(t,\omega,x)\|_{\mathcal{L}_{2}(H^{-1},L^{2})}\leq \sigma_{2}+K_2|Y(t,\omega)|_2+K_{2} | x |_{2},\quad\forall x\in L^{2},
\end{equation}
let $\bar{\sigma}_{2}=\bar{\sigma}_2+K_2|Y(t,\omega)|_2$ and $\mathbb{E}[\bar{\sigma}^2_2]\leq 2\sigma^2_2+2\mathbb{E}[K_2\sup\limits_{t\in [0,T]}|Y|^2_2]< \infty$.
\end{enumerate}
If there is no danger of confusion, we omit the superscript $(t,\omega,\xi)$ and replace $F(t,\omega,\xi,\cdot)$ with $F(\cdot)$.

\section{Uniqueness of the solution}

\begin{proposition}\label{prop3.1}
Equation (\ref{3.8}) has at most one strong solution in the sense of the definition above.
\end{proposition}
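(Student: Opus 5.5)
Let $\bar X_1,\bar X_2$ be two strong solutions of (\ref{3.8}) with the same initial datum $x-y$. Set $Z=\bar X_1-\bar X_2$. By the definition of strong solution, $Z$ is an It\^o process in $H^{-1}$:
\[
Z(t)=\int_0^t\Delta\big(F(\bar X_1(s))-F(\bar X_2(s))\big)\,ds+\int_0^t\big(\bar\sigma(\bar X_1(s))-\bar\sigma(\bar X_2(s))\big)\,dW(s).
\]
The forcing term $f$ cancels in the difference, and the initial value of $Z$ is $0$. The plan is to apply It\^o's formula (\ref{2.4}) in $H=H^{-1}$ with $\varphi(u)=\|u\|_{-1}^2$. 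Here $D\varphi(u)=2u$ and $D^2\varphi=2I$, and the trace term equals $\|\bar\sigma(\bar X_1)-\bar\sigma(\bar X_2)\|^2_{\mathcal L_2(H^{-1})}$. This gives
\[
\|Z(t)\|_{-1}^2=2\int_0^t\big\langle \Delta(F(\bar X_1)-F(\bar X_2)),Z\big\rangle_{-1}ds+\int_0^t\|\bar\sigma(\bar X_1)-\bar\sigma(\bar X_2)\|^2_{\mathcal L_2(H^{-1})}ds+M(t),
\]
where $M(t)=2\int_0^t\langle Z,(\bar\sigma(\bar X_1)-\bar\sigma(\bar X_2))dW\rangle_{-1}$ is a local martingale.

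The key step is the sign of the drift term. We use that $H^{-1}$ carries the scalar product $\langle u,v\rangle_{-1}=\langle(-\Delta)^{-1}u,v\rangle_2$, with $-\Delta$ the Dirichlet Laplacian. By the definition of strong solution, $\beta(\bar X_i+Y)\in L^2(0,T;L^2(\Omega;H^1))$. Since $F(\bar X_i)=\beta(\bar X_i+Y)-\beta(Y)$ and $\bar X_i=0$ on $\Sigma$, we get $F(\bar X_1)-F(\bar X_2)=\beta(\bar X_1+Y)-\beta(\bar X_2+Y)$. This lies in $H^1$ and vanishes on the boundary, since both terms equal $\beta(g)$ there; hence it belongs to $H^1_0$. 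Therefore
\[
\big\langle \Delta(F(\bar X_1)-F(\bar X_2)),Z\big\rangle_{-1}=-\big\langle F(\bar X_1)-F(\bar X_2),\bar X_1-\bar X_2\big\rangle_2\le 0,
\]
by the monotonicity of $F(t,\omega,\xi,\cdot)$ from (i). In fact this term is bounded above by $-\frac1K|F(\bar X_1)-F(\bar X_2)|_2^2$, but we only need the sign.

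I expect this step to be the main obstacle. One must justify that the difference of the nonlinear terms lies in $H_0^1$, and not merely in $H^1$, so that the $H^{-1}$ duality pairing identity is legitimate. This is exactly where the nonhomogeneous boundary data enter, and the cancellation of $\beta(Y)$ in the difference is what makes it work. A secondary point is that It\^o's formula (\ref{2.4}) is stated for $b\in L^1_W(0,T;L^1(\Omega,H))$ with $H=H^{-1}$. The drift $\Delta(F(\bar X_1)-F(\bar X_2))$ is in $L^2(0,T;L^2(\Omega;H^{-1}))$ because the difference is in $L^2(0,T;L^2(\Omega;H_0^1))$, so the formula applies. If one prefers, one can regularize with $(I-\varepsilon\Delta)^{-1}$ and pass to the limit.

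Next, take expectations; $M$ is a true martingale by the integrability $Z\in L^2_W(\Omega;C([0,T];H^{-1}))$ and (\ref{3.11}), possibly after localization by stopping times. Then the Lipschitz bound (\ref{3.10}) gives
\[
\mathbb E\|Z(t)\|_{-1}^2\le K_1^2\int_0^t\mathbb E\|Z(s)\|_{-1}^2\,ds.
\]
Gronwall's lemma yields $\mathbb E\|Z(t)\|^2_{-1}=0$ for all $t\in[0,T]$. By continuity of paths in $H^{-1}$, $\bar X_1=\bar X_2$ in $L^2_W(\Omega;C([0,T];H^{-1}))$. Alternatively, one can use (\ref{2.6}) to control $\mathbb E\sup_t\|Z(t)\|_{-1}^2$ directly.
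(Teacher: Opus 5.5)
Your proposal is correct and follows the paper's own proof: It\^o's formula for $\|\bar X_1-\bar X_2\|_{-1}^2$, the monotonicity identity $\langle \Delta(F(\bar X_1)-F(\bar X_2)),\bar X_1-\bar X_2\rangle_{-1}=-\langle F(\bar X_1)-F(\bar X_2),\bar X_1-\bar X_2\rangle_2\le 0$ to discard the drift, then the Lipschitz bound (\ref{3.10}) and Gronwall's lemma. Beyond the paper, you also justify two points it uses without comment: that $F(\bar X_1)-F(\bar X_2)\in H_0^1$, so the duality identity is legitimate, and that the stochastic integral is a true martingale.
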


\begin{proof}
Let $X_{1}$ and $X_{2}$ be two strong solutions to (\ref{3.8}). Then, by It\^{o} 's formula, we
have
\begin{equation*}
\begin{aligned}
d\|X_{1}(t)-X_{2}(t)\|^{2}_{-1}=&2\langle X_{1}(t)-X_{2}(t),\Delta F(X_{1}(t))- \Delta F(X_{2}(t))\rangle_{-1} dt\\
&+2\langle X_{1}(t)-X_{2}(t),(\bar{\sigma}(X_{1}(t))- \bar{\sigma}(X_{2}(t)))dW(t)\rangle_{-1}\\
&+\| \bar{\sigma}(X_{1}(t))-\bar{\sigma}(X_{2}(t))   \|^2_{\mathcal{L}_{2}(H^{-1})}dt,\  \mathbb{P}-a.s.\\
\end{aligned}
\end{equation*}
Therefore,
\begin{equation*}
\begin{aligned}
\mathbb{E}\left[\|X_{1}(t)-X_{2}(t) \|^{2}_{-1} \right]=&\mathbb{E}\left[\int^{t}_{0} 2\langle X_{1}(s)-X_{2}(s),\Delta F(X_{1}(s))- \Delta F(X_{2}(s))\rangle_{-1} ds\right]\\
&+\mathbb{E}\left[\int^{t}_{0}\| \bar{\sigma}(X_{1}(s))-\bar{\sigma}(X_{2}(s))   \|^2_{\mathcal{L}_{2}(H^{-1})}ds\right],\\
\end{aligned}
\end{equation*}
for $x,\ {x_{1}}\in D(F)$, $D(F)$ is the domain of $F$,
\begin{equation*}
\begin{aligned}
\langle \Delta F(x)-\Delta F(x_{1}),x-x_{1}\rangle_{-1}=\langle-F(x)+F(x_{1}),x-x_{1}\rangle_{2} \leq 0,
\end{aligned}
\end{equation*}
then by (\ref{3.10}) we have
\begin{equation*}
\begin{aligned}
\mathbb{E}[\|X_{1}(t)-X_{2}(t) \|^{2}_{-1} ] \leq K^2_{1}\int^{t}_{0}\mathbb{E}[\|X_{1}(s)-X_{2}(s) \|^{2}_{-1} ]ds.
\end{aligned}
\end{equation*}
Now the conclusion follows from Gronwall's lemma.
\end{proof}

\section{Existence of the solution}

To prove the existence, let us approximate (\ref{3.8}) by an equation of the form
\begin{equation}\label{4.1}
\begin{cases}
dX_\epsilon+A_\epsilon(X_\epsilon)dt=f(t)dt+\bar{\sigma}(X_\epsilon)dW(t),\\\\
X_\epsilon(0)=x.
\end{cases}
\end{equation}
Here $A_{\epsilon}$ is the Yosida approximation in $H^{-1}$ of $G_{\epsilon}$ given as follows

\begin{equation}\label{4.2}
G^{(t,\omega)}_\epsilon(u):=-\Delta(F(t,\omega,u)+\epsilon I)(u),\quad\forall u\in H_0^1,
\end{equation}
for every $(t,\omega)$ is fixed.

To this purpose, we need the following proposition.

\begin{proposition}
$G^{(t,\omega)}_\epsilon(x)$ is maximal monotone. Moreover there exists a function $\phi$ is convex and lower semi-continuous on $H^{-1}(\mathcal{O})$ such that
\begin{equation*}
\partial \phi (u) =-\Delta ((F(t,\omega,u)+\epsilon I)(u),\quad\forall u\in H_0^1,
\end{equation*}
for every $(t,\omega)$ is fixed.
\end{proposition}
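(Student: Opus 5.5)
We fix $(t,\omega)$ and write $F(\xi,r)=F(t,\omega,\xi,r)$. We set $\psi_\epsilon(\xi,r):=F(\xi,r)+\epsilon r$. By hypothesis (i) on $F$, $r\mapsto\psi_\epsilon(\xi,r)$ is continuous and strictly increasing, with $\psi_\epsilon(\xi,0)=0$ and
\[
(\psi_\epsilon(\xi,r)-\psi_\epsilon(\xi,s))(r-s)\ge\epsilon|r-s|^2,\qquad |\psi_\epsilon(\xi,r)-\psi_\epsilon(\xi,s)|\le(K+\epsilon)|r-s|.
\]
It is measurable in $\xi$ because $Y$ is. The plan is to exhibit $\phi$ explicitly and show that $\partial\phi$, computed in the $H^{-1}$ inner product, coincides with $G_\epsilon^{(t,\omega)}$. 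Maximal monotonicity then follows at once, since the subdifferential of a proper convex lsc function on a Hilbert space is maximal monotone (Minty--Rockafellar). We take the primitive $j_\epsilon(\xi,r):=\int_0^r\psi_\epsilon(\xi,s)\,ds$, which is convex in $r$ and satisfies $\frac{\epsilon}{2}r^2\le j_\epsilon(\xi,r)\le\frac{K+\epsilon}{2}r^2$. We then define
\[
\phi(u):=\begin{cases}\displaystyle\int_{\mathcal O} j_\epsilon(\xi,u(\xi))\,d\xi, & u\in L^2,\\ +\infty, & u\in H^{-1}\setminus L^2.\end{cases}
\]

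The first step is convexity and lower semicontinuity of $\phi$ on $H^{-1}$. Convexity is pointwise. For lsc, we take $u_n\to u$ in $H^{-1}$ with $\liminf\phi(u_n)<\infty$ and pass to a subsequence realising the liminf. The quadratic lower bound makes $(u_n)$ bounded in $L^2$, so a further subsequence converges weakly in $L^2$, necessarily to $u$. The functional $v\mapsto\int j_\epsilon(\xi,v)\,d\xi$ is convex and strongly continuous on $L^2$ (by the quadratic upper bound and dominated convergence), hence weakly lsc. This gives $\phi(u)\le\liminf\phi(u_n)$.

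The second step is the inclusion $G_\epsilon\subset\partial\phi$. We take $u\in H_0^1$ with $\psi_\epsilon(u)\in H_0^1$, which is the natural domain of $G_\epsilon$; we would note that for $u\in H_0^1$, $\psi_\epsilon(u)\in H^1$ requires some $\xi$-regularity of $F$, i.e. of $Y$, and we would interpret the domain as $D(G_\epsilon)=\{u\in L^2:\psi_\epsilon(u)\in H_0^1\}$. For any $v\in L^2$, convexity of $j_\epsilon$ gives $j_\epsilon(v)-j_\epsilon(u)\ge\psi_\epsilon(u)(v-u)$ pointwise. Integrating and using $\langle -\Delta w,h\rangle_{-1}=\langle w,h\rangle_2$ for $w\in H_0^1$, $h\in L^2$ (the identity already used in Proposition~\ref{prop3.1}), we get
\[
\phi(v)-\phi(u)\ge\langle -\Delta\psi_\epsilon(u),v-u\rangle_{-1}.
\]
When $v\notin L^2$ the inequality is trivial, so $-\Delta\psi_\epsilon(u)\in\partial\phi(u)$.

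The third step, which we expect to be the main obstacle, is maximality, i.e. equality $\partial\phi=G_\epsilon$. It suffices to show that $G_\epsilon$ itself is maximal monotone, since a maximal monotone operator contained in the monotone operator $\partial\phi$ must equal it. Monotonicity is $\langle G_\epsilon u-G_\epsilon v,u-v\rangle_{-1}=\langle\psi_\epsilon(u)-\psi_\epsilon(v),u-v\rangle_2\ge0$. For the range condition $R(I+G_\epsilon)=H^{-1}$, given $h\in H^{-1}$ we must solve $u-\Delta\psi_\epsilon(u)=h$. We substitute $w=\psi_\epsilon(u)$, so $u=\gamma_\epsilon(\xi,w):=\psi_\epsilon^{-1}(\xi,w)$, which is monotone and Lipschitz with constant $1/\epsilon$. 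The problem becomes the semilinear elliptic equation $\gamma_\epsilon(\xi,w)-\Delta w=h$ in $H_0^1$. The operator $w\mapsto\gamma_\epsilon(w)-\Delta w$ from $H_0^1$ to $H^{-1}$ is monotone, hemicontinuous and coercive, because $\langle\gamma_\epsilon(w),w\rangle\ge0$ and $\langle-\Delta w,w\rangle=\|w\|_1^2$. Minty--Browder therefore gives a solution $w\in H_0^1$. We then set $u=\gamma_\epsilon(w)\in L^2$, and this $u$ lies in $D(G_\epsilon)$.

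Finally, since $G_\epsilon$ is maximal monotone, so is its Yosida approximation $A_\epsilon$ used in (\ref{4.1}), and $A_\epsilon$ is single-valued and Lipschitz on $H^{-1}$.
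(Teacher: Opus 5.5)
Your proof is correct, and its skeleton is the paper's. You define $\phi$ from the primitive $j$ of $F+\epsilon I$ and prove lower semicontinuity on $H^{-1}$. The pointwise subgradient inequality gives $G_\epsilon\subset\partial\phi$, and the substitution $w=(F+\epsilon I)(u)$ reduces $R(I+G_\epsilon)=H^{-1}$ to the semilinear problem $\gamma(w)-\Delta w=h$. Maximality then forces equality.

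Where you differ is in how the two analytic steps are carried out.
\begin{itemize}
\item For lower semicontinuity, the paper uses only the superlinear growth of $j$, works in $L^1$, and needs the Dunford--Pettis criterion. You use the two-sided quadratic bound on $j_\epsilon$, which places $D(\phi)$ in $L^2$ (by that bound, your $\phi$ is the same functional as the paper's). Lower semicontinuity then comes from weak compactness in $L^2$.
\item For the elliptic problem, the paper adds a further Yosida approximation $\gamma_\lambda$, bounds $\int j(\gamma_\lambda(v_\lambda))$ to get weak compactness in $L^1$, and uses Egorov's theorem to identify the limit. You observe that $\gamma_\epsilon=(F+\epsilon I)^{-1}$ is already $\frac{1}{\epsilon}$-Lipschitz; the paper derives this itself in the proof of Lemma~\ref{lem3.1}(ii). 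Hence $w\mapsto\gamma_\epsilon(w)-\Delta w$ is continuous, monotone and coercive from $H_0^1$ to $H^{-1}$, and Minty--Browder solves it in one step with no limit passage.
\end{itemize}

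The paper's route is the generic Brezis--Barbu template for the unregularized operator $-\Delta\beta$, where $j$ is only superlinear and $\beta^{-1}$ may be multivalued. In the present setting, with the $\epsilon I$ regularization and Lipschitz $F$, it gains nothing. Yours uses exactly what $\epsilon I$ provides (the coercive bound $j_\epsilon\geq\frac{\epsilon}{2}r^2$ and a Lipschitz inverse), so it is shorter and avoids the Egorov identification altogether.

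Your hedge about the domain resolves itself under (iii). Since $Y(t,\omega)\in H^1$ and $u+Y=(\beta+\epsilon I)^{-1}\bigl(\psi_\epsilon(u)+(\beta+\epsilon I)(Y)\bigr)$, the chain rule for Lipschitz functions and $\mathrm{tr}\,\beta(v)=\beta(\mathrm{tr}\,v)$ show that, for $u\in L^2$, $\psi_\epsilon(u)\in H_0^1$ if and only if $u\in H_0^1$. So your natural domain $\{u\in L^2:\psi_\epsilon(u)\in H_0^1\}$ is exactly the $H_0^1$ of the statement.
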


\begin{proof}

We will first set $$j(r)=\int^{r}_{0}(\beta(s+Y)-\beta(Y)+\epsilon s)ds.$$

Then we can easily get $j:[0,T]\times \Omega \times \mathcal{O}\times \mathbb{R}\rightarrow \mathbb{\bar{R}}$ is an continuous, convex, proper function in the last variable.

We can now define the function $\phi:H^{-1}(\mathcal{O})\rightarrow \mathbb{\bar{R}}$ by
\begin{equation*}
\left.\phi(u)=
\begin{cases}\int_{\mathcal{O}}j(u(x))dx,&\mathrm{if}\ u\in L^1(\mathcal{O})\ \mathrm{and}\ j(u)\in L^1(\mathcal{O});\\+\infty,&\mathrm{otherwise}.\end{cases}\right.
\end{equation*}

\textbf{Step I.}
We will prove that the function $\phi$ is convex and lower semi-continuous on $H^{-1}(\mathcal{O})$.

To this purpose, we need first to show that $$\lim_{|r|\rightarrow \infty}\frac{j(r)}{|r|}=+\infty.$$

Since $Y\in L^{2}(\Omega;L^{\infty}([0,T]\times\mathcal{O} ))$ and $\beta$ is monotonically, we have
\begin{equation}\label{4.3}
\begin{aligned}
\lim_{|r|\rightarrow \infty}\frac{j(r)}{|r|}&=\lim_{|r|\rightarrow \infty}\frac{\int^{r}_{0}(\beta(s+Y)-\beta(Y)+\epsilon s)ds}{|r|}\\
&=\lim_{|r|\rightarrow \infty}\frac{\int^{r}_{0}(\beta(s+Y)-\beta(Y))ds}{|r|}+\frac{\int^{r}_{0}\epsilon sds}{|r|}\\
&\geq\lim_{|r|\rightarrow \infty}\frac{\int^{r}_{0}\epsilon sds}{|r|}\\
&=\lim_{|r|\rightarrow \infty}\frac{\epsilon|r|}{2}=+\infty.
\end{aligned}
\end{equation}

From the fact that $j$ is convex  in the last variable we deduce immediately that $\phi$ is also convex.

Concerning the lower semicontinuity, we need first to study it in $L^1(\mathcal{O})$. By the continuity of $j$, let  $\{ u_n \}\subset L^{1}(\mathcal{O})$ be strongly convergent to $u$. Then there is $\{ u_{n_k} \}\subset \{ u_n \}$ such that $u_{n_k}(x)\rightarrow u(x)$ a.e. $x\in \mathcal{O}$ for $n_k\rightarrow \infty$. Then by Fatou's lemma, it follows that
\begin{equation*}
\liminf_{n_k \to \infty}\int_{\mathcal{O}}j(x,u_{n_k}(x))dx\geq \int_{\mathcal{O}}j(x,u(x))dx,
\end{equation*}
there $\liminf_{n_k \to \infty}\phi(u_{n_k})\geq \phi(u)$. That implies that $\liminf_{n \to \infty}\phi(u_{n})\geq \phi(u)$, i.e. $\phi$ is l.s.c in the last variable in $L^1(\mathcal{O})$.

Now we consider a sequence $\{u_n\}\subset H^{-1}(\mathcal{O})\cap L^1(\mathcal{O})$ such that $u_n\rightarrow u$ in $H^{-1}(\mathcal{O})$ and $\phi(u_n)\leq \lambda$, i.e. $\int_{\mathcal{O}}j(u_n)dx\leq \lambda,\ \forall n$. We need prove that $\int_{\mathcal{O}}j(u)dx\leq \lambda$. Since this function is convex, it is weakly lower semicontinuous in $L^1(\mathcal{O})$ and so it suffices to show that $\{ u_n \}$ is weakly compact in $L^1(\mathcal{O})$. According to Dunford-Pettis criterion, we must prove that $\{ u_n \}$ is bounded in $L^1(\mathcal{O})$ and $\int_{\mathcal{O}}|u_n|d\xi$ are uniformly absolutely continuous, i.e. for any $ \varepsilon >0$, exist $\delta(\varepsilon)$ such that $\int_{A}|u_n(\xi)|d\xi\leq \varepsilon$ if $m(A)\leq \delta(\varepsilon)$ where $A$ is a measurable set of $\Omega$. By (\ref{4.3}), for every $p>0$ there exists $R(p)>0$ such that $j(r)\geq p|r|$ if $|r|\geq R(p)$. This implies that $\int_{\mathcal{O}}|u_n(\xi)|d\xi\leq C$, where $C>0$ is a constant, since
\begin{equation*}
\begin{aligned}
\int_{\mathcal{O}}|u_n(\xi)|d\xi&\leq \int_{\mathcal{O}\cap \{ u_n(\xi)\geq R(p)\}}|u_n(\xi)|d\xi+\int_{\mathcal{O}\cap \{ u_n(\xi)\leq R(p)\}}|u_n(\xi)|d\xi\\
&\leq \frac{\lambda}{p}+R(p)m(\mathcal{O}).
\end{aligned}
\end{equation*}
Moreover, for every measurable subset $E$ of $\mathcal{O}$, we have
\begin{equation*}
\begin{aligned}
\int_E|u_n(\xi)|d\xi &\leq \int_{E\cap \{ u_n(\xi)\geq R(p)\}}|u_n(\xi)|d\xi+\int_{E\cap \{ u_n(\xi)\leq R(p)\}}|u_n(\xi)|d\xi\\
&\leq \frac{1}{p}\int_{\mathcal{O}}|u_n(\xi)|d\xi +R(p)m(E).
\end{aligned}
\end{equation*}
If we choose $p>(2\varepsilon)^{-1}\sup\int_{\mathcal{O}}|u_n(\xi)|d\xi$ and $m(E)\leq \frac{\varepsilon}{2R(p)}$. Hence, $\{ u_n \}$ is weakly compact in $L^1(\mathcal{O})$. We get that the function is convex and lower semicontinuous.
By Theorem 2.1 of \cite{7}, we know that for $\phi:X\to \bar{\mathbb{R}} $ which is a l.s.c. proper convex function we have that $\partial\phi$ is a maximal monotone operator.

\vspace{10pt}

\textbf{Step II.}
We will prove now that $G_\epsilon$ is also a maximal monotone in $H^{-1}(\mathcal{O})\cap H^{-1}(\mathcal{O})$.

Let $f$ be arbitrary but fixed in $H^{-1}(\mathcal{O})$. We must show that there exist $u\in H^{-1}(\mathcal{O})\cap L^1(\mathcal{O})$ and $v\in H^1_0(\mathcal{O})$ such that
\begin{equation*}
u-\Delta (F+\epsilon I)(u)=f\, \, \text{in}\,  \mathcal{O},
\end{equation*}
equivalently
\begin{equation}\label{4.5}
\gamma(v)-\Delta v=f\, \, \text{in}\, \mathcal{O},
\end{equation}
where $\gamma(v)=(F+\epsilon I)^{-1}(v)$ and $\gamma(v)\in H^{-1}(\mathcal{O})\cap L^1(\mathcal{O}),\ v\in H^1_0(\mathcal{O})$.

Consider the Yosida approximate equation,
\begin{equation}\label{4.6}
\gamma_{\lambda}(v_{\lambda})-\Delta v_{\lambda}=f\ \ \text{in}\  \mathcal{O},\ \ v_{\lambda}=0\ \text{in}\ \partial\mathcal{O},
\end{equation}
where $\gamma_{\lambda}=\lambda^{-1}(1-(1+\lambda\gamma)^{-1}),\ \lambda >0$ is the Yosida approximation of $\gamma_{\lambda}$ in $\mathbb{R}$. It is readily seen that (\ref{4.6}) has a unique solution $v_{\lambda}\in H^1_0(\mathcal{O})$. Indeed, since $-\Delta$ is maximal monotone from $H^1_0(\mathcal{O})$ to $H^{-1}(\mathcal{O})$ and $v\rightarrow \gamma_{\lambda}(v)$ is monotone and continuous from $H^1_0(\mathcal{O})$ to $H^{-1}(\mathcal{O})$ we infer by Corollary 2.1 of \cite{8} that $v\rightarrow \gamma_{\lambda}(v)-\Delta v$ is maximal monotone in $H^1_0(\mathcal{O})\times H^{-1}(\mathcal{O})$, and by Corollary 1.2 of \cite{7} that it is surjective.

Multiplying (\ref{4.6}) by $v_{\lambda}$ and by integrating over $\mathcal{O}$, we get
\begin{equation*}
\int_{\mathcal{O}}|\nabla v_{\lambda}|^2 d\xi+\int_{\mathcal{O}}v_{\lambda}\gamma_{\lambda}(v_{\lambda}) d\xi=\int_{\mathcal{O}}fv_{\lambda}d\xi.
\end{equation*}
Because $\gamma_{\lambda}$ is monotone and,
\begin{equation*}
\int_{\mathcal{O}}|\nabla v_{\lambda}|^2 d\xi \leq \frac{1}{4c_{\mathcal{O}}^2}\| f \|^2_{-1},
\end{equation*}
where $c_{\mathcal{O}}$ depending only on $\mathcal{O}$ and on the dimension $N$.

Hence, $v_{\lambda}$ is bounded in $H^1_0(\mathcal{O})$. Then, on a subsequence, again denoted $\lambda$, we have
\begin{equation*}
v_{\lambda}\rightharpoonup v\ \text{in}\ H^1_0(\mathcal{O}),\ \ v_{\lambda}\rightharpoonup v\  \text{in}\ L^2(\mathcal{O}).
\end{equation*}
Thus, extracting further subsequences, we may assume that
\begin{equation}\label{4.7}
\begin{aligned}
v_{\lambda}(x)\rightarrow v(x)&\ \ \ a.e.\ x\in\mathcal{O},\\
(1+\lambda\gamma)^{-1}v_{\lambda}(x)\rightarrow v(x)&\ \ \ a.e.\ x\in\mathcal{O},
\end{aligned}
\end{equation}
because for arbitrary but fix $x\in \mathcal{O}$, we have $D(\gamma)=R(F+\epsilon I)=\mathbb{R}$, and
\begin{equation*}
\|v_{\lambda}-(1+\lambda\gamma)^{-1}v_{\lambda} \|^2_2\leq\lambda\| \gamma(1+\lambda\gamma)^{-1}v_{\lambda} \|^2_2\leq\frac{\lambda}{\epsilon}\| v_{\lambda} \|^2_2\rightarrow 0,
\end{equation*}
when $\lambda\rightarrow0$.

Then, letting $\lambda$ tend to zero in (\ref{4.6}), we see that $\gamma_{\lambda}(v_{\lambda})\rightarrow u$ in $H^{-1}(\mathcal{O})$ and
\begin{equation*}
u-\Delta v=f\ \ \text{in}\  \mathcal{O},\ \ v(x)\in H^1_0(\mathcal{O}).
\end{equation*}
It remains to be shown that $u\in L^1(\mathcal{O})$ and $u(x)= \gamma(v(x))$ a.e.$x\in \mathcal{O}$.

Multiplying (\ref{4.6}) by $v_{\lambda}$, we see that
\begin{equation*}
\int_{\mathcal{O}}\gamma_{\lambda}(v_{\lambda})v_{\lambda}dx\leq \frac{1}{4}\|f\|^2_{-1}.
\end{equation*}
On the other hand, for some $u_0\in D(j)$, because of $\partial_r j=F+\epsilon I$, we have $j(\gamma_{\lambda}(v_{\lambda}))\leq j(u_0)+(\gamma_{\lambda}(v_{\lambda})(x)-u_0)v$ for $v \in (F+\epsilon I)(\gamma_{\lambda}(v_{\lambda}))$. This yields
\begin{equation*}
\begin{aligned}
\int_{\mathcal{O}}j(g_{\lambda}(x))dx\leq& \int_{\mathcal{O}} j(u_0) dx+\int_{\mathcal{O}}\gamma_{\lambda}(v_{\lambda})(x)(1+\lambda\gamma)^{-1}v_{\lambda}  dx+\frac{1}{2}\int_{\mathcal{O}} |u_{0}|^2+|v_{\lambda}|^2 dx\\
\leq& \int_{\mathcal{O}} j(u_0) dx+\int_{\mathcal{O}}\gamma_{\lambda}(v_{\lambda})(x)(1+\lambda\gamma)^{-1}v_{\lambda}  dx\\
&+\int_{\mathcal{O}}  \gamma_{\lambda}(v_{\lambda})(x)\lambda\gamma(1+\lambda\gamma)^{-1}v_{\lambda} dx+\frac{1}{2}\int_{\mathcal{O}} |u_{0}|^2+|v_{\lambda}|^2 dx\\
\leq& \int_{\mathcal{O}} j(u_0) dx+\int_{\mathcal{O}}\gamma_{\lambda}(v_{\lambda})(x)v_{\lambda}  dx+\frac{1}{2}\int_{\mathcal{O}} |u_{0}|^2+|v_{\lambda}|^2 dx\leq C,
\end{aligned}
\end{equation*}
because $(1+\lambda\gamma)^{-1}v_{\lambda}= (F+\epsilon I)(\gamma_{\lambda}(v_{\lambda}))$ and $\gamma_{\lambda}(v_{\lambda})=\gamma(1+\lambda\gamma)^{-1}v_{\lambda}$. As seen before, this implies that $\{ \gamma_{\lambda}(v_{\lambda}) \}$ is weakly compact in $L^1(\mathcal{O})$. Hence, $u\in L^1(\mathcal{O})$ and
\begin{equation}\label{4.8}
\gamma_{\lambda}(v_{\lambda})\rightharpoonup u\ \ \ \text{in}\ L^1(\mathcal{O})\ \ \ \ \text{for}\ \lambda\rightarrow 0.
\end{equation}
On the other hand, by (\ref{4.7}) it follows by virtue of the Egorov theorem that for every $\varepsilon >0$ there exists a measurable subset $E_{\varepsilon}\in \mathcal{O}$ such that $m(\mathcal{O}\backslash E_{\varepsilon})\leq \varepsilon,\ \{ (1+\lambda\gamma)^{-1}v_{\lambda} \}$ is bounded in $L^{\infty}(E_{\varepsilon})$, and
\begin{equation}\label{4.9}
(1+\lambda\gamma)^{-1}v_{\lambda}\rightarrow v\ \ \text{uniformly in}\ E_{\varepsilon}\ \ \ \text{as}\ \lambda\rightarrow0.
\end{equation}
Since $\varepsilon$ is arbitrary, we obtain by classical arguement that $v(x)= \gamma(u(x))$ a.e. $x\in \mathcal{O}$, as desired.

\vspace{10pt}

\textbf{Step III.}
We finally prove that $G_\epsilon=\partial \phi$.

We consider an arbitrary element $[u, -\Delta v]\in G_\epsilon$, i.e.,\ $u\in H^{-1}(\mathcal{O})\cap L^{1}(\mathcal{O})$, $v\in H^1_0(\mathcal{O})$, $v(x)=(F+\epsilon I)(u(x))$ a.e.\ $x\in \mathcal{O}$. Since $v(x)(u(x)-\bar{u}(x))\geq j(u(x))-j(\bar{u}(x))$ a.e. $x\in \mathcal{O}$, it follows by Lemma 2.2 of \cite{7} that
\begin{equation*}
\langle G_\epsilon u,u-\bar{u} \rangle=(v,u-\bar{u})=\int_{\mathcal{O}}v(x)(u(x)-\bar{u}(x))dx\geq\int_{\mathcal{O}}j(u(x))-j(\bar{u}(x))dx,\ \forall\bar{u}\in D(\phi).
\end{equation*}
Hence,
\begin{equation*}
\langle G_\epsilon u,u-\bar{u} \rangle\geq \phi(u)-\phi(\bar{u}),\ \ \ \forall\bar{u}\in H^{-1}(\mathcal{O}),
\end{equation*}
this implies $G_\epsilon(u)\subset \partial \phi(u)$. We complete the proof.
\end{proof}

Now we consider $A_\epsilon$ the Yosida approximation of $G_\epsilon$, i.e.
\begin{equation}\label{4.10}
A_\epsilon(t,\omega,x):=\frac{1}{\epsilon}(x-J^{(t,\omega)}_\epsilon(x))=-\Delta(F(t,\omega,\cdot)+\epsilon I)(J^{(t,\omega)}_\epsilon(x)),\quad x\in H^{-1},
\end{equation}
where $J^{(t,\omega)}_{\epsilon}$ is the
\begin{equation}\label{4.11}
J^{(t,\omega)}_{\epsilon}:=(I+\epsilon G^{(t,\omega)}_{\epsilon})^{-1},\quad\epsilon>0.
\end{equation}
We have that $A_{\epsilon}=-\Delta(F+\epsilon I)(J_\epsilon(x))$ is Lipschitz continuous both in $H^{-1}$ and in $L^{2}$ therefore that problem (\ref{4.1}) has a unique strong solution $X_{\epsilon}$ belonging to $L_{W}^{2}(\Omega;C([0,T];H^{-1}))$ for all $x \in H^{-1}$ and a unique strong solution $X_{\epsilon}$ belonging to $L_{W}^{2}(\Omega;C([0,T];L^{2}))$ for all $x \in L^{2}$ by standard existence results.

In order to pass to the limit as $\epsilon\rightarrow 0$ we need to check the following properties of the resolvent.
\begin{lemma}\label{lem3.1}
We have that
\begin{align}
\mathrm{i})\ &\|J_{\epsilon}(x)-J_{\epsilon}(x_{1})\|_{-1}\leq\|x-x_{1}\|_{-1},\quad\forall x,x_{1}\in H^{-1}, \label{4.12} \\
&\|J_\epsilon(x)\|_{-1}\leq\|x\|_{-1},\quad\forall x\in H^{-1}, \label{4.13} \\
\mathrm{ii})\ &|J_{\epsilon}(x)-J_{\epsilon}(x_{1})|_{2}\leq\frac{c}{\epsilon}|x-x_{1}|_{2},\quad\forall x,x_{1}\in L^{2}, \label{4.14} \\
\mathrm{iii})\ &|J_\epsilon(x)|_2\leq|x|_2+\epsilon |\Delta\beta(Y)|_2+\sqrt{\frac{\epsilon K}{2}}\left(\int_{\mathcal{O}} |\nabla Y|^2 d\xi\right)^{\frac{1}{2}},\quad\forall x\in L^2, \label{4.15} \\
\mathrm{iv})\ &\mathbb{E}\left[\int^{T}_{0}\| f(t) \|^{2}_{-1}dt\right]< \infty. \label{4.16}
\end{align}
\end{lemma}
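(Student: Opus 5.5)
Parts i) and iv) follow from general principles; ii) and iii) need the equation satisfied by the resolvent. Throughout, $(t,\omega)$ is fixed and $J_\epsilon=J^{(t,\omega)}_\epsilon$.

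\textbf{Part i).} By the preceding Proposition, $G_\epsilon=\partial\phi$ is maximal monotone in $H^{-1}$. Its resolvent $J_\epsilon=(I+\epsilon G_\epsilon)^{-1}$ is therefore nonexpansive in $H^{-1}$, which gives (\ref{4.12}). Since $F(0)=0$, we have $G_\epsilon(0)=0$ and hence $J_\epsilon(0)=0$. Taking $x_1=0$ in (\ref{4.12}) then yields (\ref{4.13}).

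\textbf{Parts ii) and iii).} Set $u=J_\epsilon(x)$, i.e. $u-\epsilon\Delta\big(F(u)+\epsilon u\big)=x$ with $F(u)+\epsilon u\in H^1_0$.

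For (\ref{4.14}), let $u_1=J_\epsilon(x_1)$, subtract the two equations, and test with $u-u_1$ in $L^2$. The term $-\epsilon\langle\Delta(\Phi(u)-\Phi(u_1)),u-u_1\rangle_2$, where $\Phi=F+\epsilon I$, is the delicate one, since $u$ itself need not be in $H^1$. It is cleaner to test with $\Phi(u)-\Phi(u_1)\in H^1_0$. This gives
\[
\langle u-u_1,\Phi(u)-\Phi(u_1)\rangle_2+\epsilon\,|\nabla(\Phi(u)-\Phi(u_1))|_2^2=\langle x-x_1,\Phi(u)-\Phi(u_1)\rangle_2 .
\]
Now use $\epsilon|r-s|\le|\Phi(r)-\Phi(s)|\le(K+\epsilon)|r-s|$. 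The left-hand side is at least $\epsilon|u-u_1|_2^2$ and the right-hand side is at most $(K+\epsilon)|x-x_1|_2|u-u_1|_2$, so $|u-u_1|_2\le\frac{K+\epsilon}{\epsilon}|x-x_1|_2$. This is (\ref{4.14}) with $c=K+\epsilon\le K+1$ for $\epsilon\le1$.

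For (\ref{4.15}), the point is that $J_\epsilon(0)=0$ but $\beta(Y)$ does not vanish on the boundary. Write $F(u)=\beta(u+Y)-\beta(Y)$, so that $\Delta F(u)=\Delta\beta(u+Y)-\Delta\beta(Y)$. The equation becomes
\[
u-\epsilon\Delta\big(\beta(u+Y)+\epsilon u\big)=x-\epsilon\Delta\beta(Y).
\]
Test it with $u$ in $L^2$; this is justified by approximating $u$ by smooth functions, or by using $u=\epsilon^{-1}(\Phi(u)-F(u))$. The left side gives $|u|_2^2+\epsilon\int\nabla\beta(u+Y)\cdot\nabla u+\epsilon^2|\nabla u|_2^2$. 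Write $\nabla u=\nabla(u+Y)-\nabla Y$ and set $w=u+Y$. Then
\[
\int\beta'(w)|\nabla w|^2-\int\beta'(w)\nabla w\cdot\nabla Y\ \ge\ \tfrac12\int\beta'(w)|\nabla w|^2-\tfrac12\int\beta'(w)|\nabla Y|^2\ \ge\ -\tfrac K2|\nabla Y|_2^2 .
\]
The right side is bounded by $(|x|_2+\epsilon|\Delta\beta(Y)|_2)|u|_2$. Hence
\[
|u|_2^2\le(|x|_2+\epsilon|\Delta\beta(Y)|_2)|u|_2+\tfrac{\epsilon K}{2}|\nabla Y|_2^2 .
\]
Solving this quadratic inequality, and using $\sqrt{a^2+b^2}\le a+b$, gives (\ref{4.15}).

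\textbf{Part iv).} We have $\|f\|_{-1}\le C|f|_2\le C(|\Delta\beta(Y)|_2+|b|_2)$ by the Poincar\'e embedding $L^2\subset H^{-1}$. Assumption (iii) places both $\Delta\beta(Y)$ and $b$ in $L^2(0,T;L^2(\Omega;L^2))$, which gives (\ref{4.16}).

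\textbf{Main obstacle.} The hardest point is justifying the tests in ii) and iii), because $u=J_\epsilon(x)$ is only known to satisfy $\Phi(u)\in H^1_0$. The way through is to work with $\Phi(u)$ and $\beta(u+Y)$, which lie in $H^1$ because $Y\in H^1$ and $\Phi^{-1}$ is Lipschitz. When $\beta$ is not differentiable, $\beta'$ should be interpreted via the chain rule for Lipschitz functions, or $\beta$ can be regularized first.
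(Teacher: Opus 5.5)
Your proposal is correct and follows the paper's proof essentially step for step:
\begin{itemize}
\item for i), nonexpansiveness of the resolvent together with $J_\epsilon(0)=0$;
\item for ii), the test function $\Phi(u)-\Phi(u_1)$; you close with $|\Phi(u)-\Phi(u_1)|_2\le(K+\epsilon)|u-u_1|_2$, whereas the paper absorbs the gradient term via Poincar\'e and Young, and this only changes the constant $c$;
\item for iii), testing with $u$ and using $0\le\beta'\le K$ plus Young, which yields exactly the paper's inequality $|u|_2^2\le(|x|_2+\epsilon|\Delta\beta(Y)|_2)|u|_2+\frac{\epsilon K}{2}|\nabla Y|_2^2$;
\item for iv), the embedding $L^2\subset H^{-1}$, whereas the paper bounds $\|\Delta\beta(Y)\|_{-1}\le K|\nabla Y|_2$.
\end{itemize}
The regularity issue you flag is settled as in the paper, because in fact $u\in H^1_0$: $(\beta+\epsilon I)(u+Y)=\Phi(u)+(\beta+\epsilon I)(Y)\in H^1$ has trace $(\beta+\epsilon I)(g)$, and $(\beta+\epsilon I)^{-1}$ is $\epsilon^{-1}$-Lipschitz and independent of $\xi$, so $u+Y\in H^1$ has trace $g$.
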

\begin{proof}

\vspace{10pt}

$\mathrm{i})$ Let $x$, $x_{1}\in H^{-1}$, $y=J_{\epsilon}(x)$, $y_{1}=J_{\epsilon}(x_{1})$
\begin{equation}\label{4.17}
(y-y_{1})-\epsilon(\Delta(F(t,\omega,\cdot)+\epsilon I)(y)-\Delta(F(t,\omega,\cdot)+\epsilon I)(y_{1}))=x-x_{1},
\end{equation}
multiply by $(y-y_{1})$ ,because $-\epsilon\Delta(F(t,\omega,\cdot)+\epsilon I)$ is monotone, one gets
\begin{equation*}
\|y-y_{1}\|_{-1}^{2}\leq\|y-y_{1}\|_{-1}\|x-x_{1}\|_{-1}.
\end{equation*}
This inplies we get (\ref{4.12}). Set $x_{1}=0$ to get (\ref{4.13}).

\vspace{10pt}

$\mathrm{ii})$ Let $x,\ x_{1}\in H^{-1}$, $y=J_{\epsilon}(x)$, $y_{1}=J_{\epsilon}(x_{1})$. One has
\begin{equation}\label{ast}
y-y_{1}-\epsilon\Delta(F(t,\omega,\cdot)+\epsilon I)(y)+\epsilon\Delta(F(t,\omega,\cdot)+\epsilon I)(y_{1})=x-x_{1}.
\end{equation}
Recall that
\begin{equation*}
\langle F(F+\epsilon I)^{-1}(r)-F(F +\epsilon I)^{-1}(s),(F +\epsilon I)^{-1}(r)-(F +\epsilon I)^{-1}(s)\rangle \geq0,
\end{equation*}
it follows that
\begin{equation*}
\begin{aligned}
&\langle r-s,(F +\epsilon I)^{-1}(r)-(F +\epsilon I)^{-1}(s)\rangle \\
\geq& \epsilon \langle (F+\epsilon I)^{-1}(r)-(F +\epsilon I)^{-1}(S),(F +\epsilon I)^{-1}(r)-(F +\epsilon I)^{-1}(s)\rangle.
\end{aligned}
\end{equation*}
As a consequence
\begin{equation*}
\begin{aligned}
\vert(F+\epsilon I)^{-1}(r)-(F+\epsilon I)^{-1}(s)\vert\leq \frac{1}{\epsilon}\vert r-s\vert.
\end{aligned}
\end{equation*}
Since $(F+\epsilon I)(y)\in H^{1}_{0}$, we have $y\in H^{1}_{0}$. Multiplying on both sides of (\ref{ast}) by $(F+\epsilon I)(y)-(F+\epsilon I)(y_{1})$ and by integrating over $\mathcal{O}$,
\begin{equation*}
\begin{aligned}
&\int_{\mathcal{O}}(y-y_{1})((F+\epsilon I)(y)-(F+\epsilon I)(y_{1}))d\xi\\
&-\epsilon\langle(\Delta(F+\epsilon I)(y)-\Delta(F+\epsilon I)(y_{1})),((F+\epsilon I)(y)-(F+\epsilon I)(y_{1}))\rangle  \\
=&\int_{\mathcal{O}}(x-x_{1})((F+\epsilon I)(y)-(F+\epsilon I)(y_{1}))d\xi.
\end{aligned}
\end{equation*}
Taking into account that $F$ is monotone increasing it follows that
\begin{equation*}
\epsilon|y-y_{1}|_2^2+\epsilon\|(F+\epsilon I)(y)-(F+\epsilon I)(y_{1})\|_1^2\leq|x-x_{1}|_2|(F+\epsilon I)(y)-(F+\epsilon I)(y_{1})|_2.
\end{equation*}
Finally, by Poincar\'{e}'s inequality there exists $C>0$ such that
\begin{equation*}
\begin{aligned}
&\epsilon|y-y_{1}|_{2}^{2}+\epsilon C|(F+\epsilon I)(y)-(F+\epsilon I)(y_{1})|_{2}^{2} \\
\leq&|x-x_{1}|_2|(F+\epsilon I)(y)-(F+\epsilon I)(y_{1})|_2 \\
\leq&\epsilon C|(F+\epsilon I)(y)-(F+\epsilon I)(y_{1})|_2^2+\frac{1}{4\epsilon C}|x-x_{1}|_2^2,
\end{aligned}
\end{equation*}
and therefore
\begin{equation*}
|y-y_{1}|_{2}^{2}\leq\frac{1}{4\epsilon^{2}C}|x-x_{1}|_{2}^{2},
\end{equation*}
such that (\ref{4.14}) is proved.

\vspace{10pt}

$\mathrm{iii})$ Let $y_{1}=0$ and fix $x\in H^{-1}$, we have
\begin{equation}\label{asst}
y-\epsilon\Delta(F(t,\omega,\cdot)+\epsilon I)(y)=x.
\end{equation}
One obviously has $0\leq\int_{\mathcal{O}}\nabla\beta(y+Y)\cdot\nabla\beta(y+Y) d\xi$.
\begin{equation*}
\begin{aligned}
&\int_{\mathcal{O}}\nabla\beta(y+Y)\cdot\nabla\beta(y+Y)d\xi\\
=&\int_{\mathcal{O}}\sum^{d}_{i=1}(\frac{\partial}{\partial x_i}\beta(y(\xi)+Y(\xi)))^{2}d\xi\\
\leq& \int_{\mathcal{O}} K\sum^{d}_{i=1}(\frac{\partial}{\partial x_i}(y(\xi)+Y(\xi)))(\frac{\partial}{\partial x_i}\beta(y(\xi)+Y(\xi)))d\xi\\
=&K\int_{\mathcal{O}}\nabla\beta(y+Y)\cdot\nabla (y+Y)d\xi\\
\leq& K\int_{\mathcal{O}} \nabla\beta(y+Y)\cdot \nabla y d\xi+\frac{K^2}{2}\int_{\mathcal{O}} |\nabla Y|^2 d\xi+ \frac{1}{2}\int_{\mathcal{O}} |\nabla\beta(y+Y)|^2 d\xi.
\end{aligned}
\end{equation*}
Then
\begin{equation*}
\begin{aligned}
&\int_{\mathcal{O}}\nabla\beta(y+Y)\cdot\nabla\beta(y+Y)d\xi\\
\leq&2K\int_{\mathcal{O}} (\nabla\beta(y+Y)-\nabla \beta(Y))\cdot \nabla y d\xi+2K\int_{\mathcal{O}} \nabla \beta(Y)\cdot\nabla y d\xi+K^2\int_{\mathcal{O}} |\nabla Y|^2 d\xi.
\end{aligned}
\end{equation*}
Because of the left is nonnegative
\begin{equation}\label{4.18}
0\leq \int_{\mathcal{O}} (\nabla\beta(y+Y)-\nabla \beta(Y)+\epsilon\nabla y)\cdot \nabla y d\xi+\int_{\mathcal{O}} \nabla \beta(Y)\cdot\nabla y d\xi+\frac{K}{2}\int_{\mathcal{O}} |\nabla Y|^2 d\xi.
\end{equation}
We have $0\leq\langle \nabla(F+\epsilon I)(y),\nabla y \rangle_{2}+\langle -\Delta\beta(Y),y \rangle_{2}+\frac{K}{2}\int_{\mathcal{O}} |\nabla Y|^2 d\xi$.

Multiplying (\ref{asst}) by $y$, integrating over $\mathcal{O}$ and add $\epsilon(\langle -\Delta\beta(Y),y \rangle_{2}+\frac{K}{2}\int_{\mathcal{O}} |\nabla Y|^2 d\xi)$, one gets
\begin{equation*}
|y|^{2}_{2}\leq |x|_{2}|y|_{2}+\epsilon|y|_{2}|\Delta\beta(Y)|_{2}+\epsilon\frac{K}{2}\int_{\mathcal{O}} |\nabla Y|^2 d\xi.
\end{equation*}
Because for $a,b,c,d\geq0$, if $a^2\leq ab+ac+d^2$ we have $a\leq b+c+d$, so we have
\begin{equation*}
|y|_2\leq |x|_2+\epsilon |\Delta\beta(Y)|_2+\sqrt{\frac{\epsilon K}{2}}(\int_{\mathcal{O}} |\nabla Y|^2 d\xi)^{\frac{1}{2}},
\end{equation*}
such that (\ref{4.15}) is proved.

\vspace{10pt}

$\mathrm{iv})$ Finally, we prove that
\begin{equation*}
\begin{aligned}
\mathbb{E}\left[\int^{T}_{0}\| f(t) \|^{2}_{-1}dt\right]&=\mathbb{E}\left[\int^{T}_{0}\| \Delta\beta(Y(t))+b(t) \|^{2}_{-1}dt\right]\\
&\leq 2\mathbb{E}\left[\int^{T}_{0}\| \Delta\beta(Y(t)) \|^{2}_{-1}dt\right]+2\mathbb{E}\left[\int^{T}_{0}\| b(t) \|^{2}_{-1}dt\right]\\
&\leq 2K^2\mathbb{E}\left[\int^{T}_{0}\| Y(t) \|^{2}_{1}dt\right]+2\mathbb{E}\left[\int^{T}_{0}\| b(t) \|^{2}_{-1}dt\right]\\
&< \infty.
\end{aligned}
\end{equation*}

\end{proof}

\begin{remark}
For all $x\in H^{-1}$ and all $(t,\omega)\in [0,T]\times\Omega,\epsilon > 0$ we have
\begin{equation}\label{4.19}
\langle A_\epsilon(t,\omega,x),x\rangle_{-1}=\langle(F+\epsilon I)(J_\epsilon(x)),J_\epsilon(x)\rangle_2+\epsilon\|A_\epsilon(t,\omega,x)\|_{-1}^2,
\end{equation}
and for all $x\in L^{2}$,
\begin{equation}\label{4.20}
\langle A_\epsilon(t,\omega,x),x\rangle_2=\langle -\Delta(F+\epsilon I)(J_\epsilon(x)),J_\epsilon(x)\rangle_2+\epsilon|A_\epsilon(t,\omega,x)|_2^2.
\end{equation}
\end{remark}
We are going to show that $\{X_{\epsilon}\}$ is a Cauchy sequence in the space $L_{\mathrm{W}}^{2}(\Omega;C([0,T];H^{-1}))$. The idea of the proof is  based on a suitable a priori estimate for
\begin{equation*}
\mathbb{E}\left[\int_0^t\|A_\epsilon(X_\epsilon(s))\|_{-1}^2ds\right].
\end{equation*}

To this end, we will prove the following preliminary lemmas.
\begin{lemma}\label{lem3.3}
For all $x,y\in H^{-1}$, and $(t,\omega)\in[0,T]\times\Omega,\epsilon,\eta>0$, we have
\begin{equation}\label{4.21}
\begin{aligned}
\langle A_{\epsilon}(t,\omega,x)-A_{\eta}(t,\omega,y),x-y\rangle_{-1} =&\langle(F+\epsilon I)(J_{\epsilon}(x))-(F+\epsilon I)(J_{\eta}(y)),J_{\epsilon}(x)-J_{\eta}(y)\rangle_{2}  \\
& +(\epsilon-\eta)\langle J_{\eta}(y),J_{\epsilon}(x)-J_{\eta}(y)\rangle_{2}   \\
&+\langle A_{\epsilon}(x)-A_{\eta}(y),\epsilon A_{\epsilon}(x)-\eta A_{\eta}(y)\rangle_{-1}.
\end{aligned}
\end{equation}
\end{lemma}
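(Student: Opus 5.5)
The identity is purely algebraic once we use the resolvent decomposition and the way $-\Delta$ acts on the $H^{-1}$ scalar product. By (\ref{4.10}) and (\ref{4.11}), for every $x\in H^{-1}$ we have $x=J_\epsilon(x)+\epsilon A_\epsilon(x)$. Likewise $y=J_\eta(y)+\eta A_\eta(y)$. Hence
\begin{equation*}
x-y=\big(J_\epsilon(x)-J_\eta(y)\big)+\big(\epsilon A_\epsilon(x)-\eta A_\eta(y)\big).
\end{equation*}
Pairing with $A_\epsilon(x)-A_\eta(y)$ in $H^{-1}$ immediately produces the last term of (\ref{4.21}), together with
\begin{equation*}
\langle A_\epsilon(x)-A_\eta(y),J_\epsilon(x)-J_\eta(y)\rangle_{-1}.
\end{equation*}
It remains to identify this pairing with the first two terms of (\ref{4.21}).

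For this I use the representation $A_\epsilon(x)=-\Delta (F+\epsilon I)(J_\epsilon(x))$ and $A_\eta(y)=-\Delta(F+\eta I)(J_\eta(y))$, with $(F+\epsilon I)(J_\epsilon(x))$ and $(F+\eta I)(J_\eta(y))$ belonging to $H_0^1$. This is the domain of $G_\epsilon$, $G_\eta$ established in Step II of the previous proposition. Recall that the $H^{-1}$ scalar product is $\langle u,v\rangle_{-1}=\langle(-\Delta)^{-1}u,v\rangle_2$ whenever $v\in L^2$. So for $w\in H_0^1$ and $v\in L^2$ we have $\langle -\Delta w,v\rangle_{-1}=\langle w,v\rangle_2$; this is the same fact already used in the uniqueness proof (Proposition \ref{prop3.1}). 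Applying it gives
\begin{equation*}
\langle A_\epsilon(x)-A_\eta(y),J_\epsilon(x)-J_\eta(y)\rangle_{-1}=\langle (F+\epsilon I)(J_\epsilon(x))-(F+\eta I)(J_\eta(y)),J_\epsilon(x)-J_\eta(y)\rangle_2 .
\end{equation*}
Finally I write $(F+\eta I)(J_\eta(y))=(F+\epsilon I)(J_\eta(y))-(\epsilon-\eta)J_\eta(y)$ and split the $L^2$ product. This yields exactly the first two terms of (\ref{4.21}).

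The only point requiring care, and the expected obstacle, is justifying that $J_\epsilon(x)$ and $J_\eta(y)$ lie in $L^2$, so that the $L^2$ pairings make sense. Since $(F+\epsilon I)(J_\epsilon(x))\in H_0^1\subset L^2$, I would argue as follows. Pointwise in $(t,\omega,\xi)$ the map $(F+\epsilon I)^{-1}$ is Lipschitz with constant $1/\epsilon$ and vanishes at $0$, as shown in the proof of Lemma \ref{lem3.1} ii). Hence $|J_\epsilon(x)|\le \epsilon^{-1}|(F+\epsilon I)(J_\epsilon(x))|$ a.e., so $J_\epsilon(x)\in L^2$, and the same argument gives $J_\eta(y)\in L^2$. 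Also $\Delta$ is understood with homogeneous Dirichlet data. This is consistent with the fact that $F(t,\omega,\xi,0)=0$, so the boundary datum $g$ has already been absorbed into $Y$. With these facts in place, the computation above holds for every fixed $(t,\omega)$.
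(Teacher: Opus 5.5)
Your proof is correct and is essentially the paper's argument. Both use the decomposition $x-y=(J_\epsilon(x)-J_\eta(y))+(\epsilon A_\epsilon(x)-\eta A_\eta(y))$, the representation $A_\epsilon=-\Delta(F+\epsilon I)(J_\epsilon(\cdot))$, and the splitting $(F+\eta I)=(F+\epsilon I)-(\epsilon-\eta)I$. The only differences are that you pass to the $L^2$ pairing before splitting, which avoids the paper's tacit use of $(F+\epsilon I)(J_\eta(y))\in H_0^1$ inside $\Delta$, and that you explicitly check $J_\epsilon(x),J_\eta(y)\in L^2$, which the paper leaves implicit.
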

\begin{proof}
We have
\begin{equation}\label{4.22}
\begin{aligned}
\langle A_{\epsilon}(t,\omega,x)-A_{\eta}(t,\omega,y),x-y\rangle_{-1} =&\langle A_{\epsilon}(x)-A_{\eta}(y),J_{\epsilon}(x)-J_{\eta}(y)\rangle_{-1}  \\
&+\langle A_{\epsilon}(x)-A_{\eta}(y),\epsilon A_{\epsilon}(x)-\eta A_{\eta}(y)\rangle_{-1}.
\end{aligned}
\end{equation}
On the other hand,
\begin{equation*}
\begin{aligned}
&\langle A_{\epsilon}(x)-A_{\eta}(y),J_{\epsilon}(x)-J_{\eta}(y)\rangle_{-1} \\
=&-\langle\Delta(F+\epsilon I)(J_{\epsilon}(x))-\Delta(F+\eta I)(J_{\eta}(y)),J_{\epsilon}(x)-J_{\eta}(y)\rangle_{-1} \\
=&-\langle\Delta(F+\epsilon I)(J_{\epsilon}(x))-\Delta(F+\epsilon I)(J_{\eta})(y),J_{\epsilon}(x)-J_{\eta}(y)\rangle_{-1} \\
&-(\epsilon-\eta)\langle\Delta J_{\eta}(y),J_{\epsilon}(x)-J_{\eta}(y)\rangle_{-1}.
\end{aligned}
\end{equation*}
Substituting this in (\ref{4.22}) yields (\ref{4.21}).

\end{proof}

\begin{lemma}\label{lem3.4}
Let $x\in H^{-1}$, $\epsilon >0$ and let $X_{\epsilon}$ be the solution to (\ref{4.1}). Then for any $T>0$ the following inequality holds here
\begin{equation}\label{4.23}
\begin{aligned}
&\mathbb{E}\left[\|X_\epsilon(t)\|_{-1}^2\right]+2\mathbb{E}\left[\int_0^t\langle J_\epsilon(X_\epsilon(s)),(\beta+\epsilon I)(J_\epsilon(X_\epsilon(s)))\rangle_2ds\right]+2\epsilon \mathbb{E}\left[\int_0^t\|A_\epsilon(X_\epsilon(s))\|_{-1}^2ds\right]\\
\leq &e^{(3K_1^2+1)t}(\|x\|_{-1}^2+3\sigma_1^2t+\mathbb{E}\left[\int^{t}_{0}\| f(s) \|^2_{-1}ds\right]+3K^2_1\mathbb{E}\left[\int_{0}^{t} \| Y(s) \|^2_{-1}ds\right]),\quad t\in[0,T].
\end{aligned}
\end{equation}
Moreover,
\begin{equation}\label{4.24}
\begin{aligned}
\mathbb{E}\left[\sup\limits_{t\in[0,T]}\|X_{\epsilon}(t)\|_{-1}^{2}\right]\leq&(2\| x \|^2_{-1}+6(2c^2_1+1)\sigma^2_{1}T+6(2c^2_1+1)K^2_1\mathbb{E}\left[\int_{0}^{T} \| Y(s) \|^2_{-1}ds\right]\\
&+2\mathbb{E}\left[\int^{T}_{0}\| f(s) \|^2_{-1}ds\right])e^{(12c^2_1K^2_1+6K^2_1)T}.
\end{aligned}
\end{equation}
\end{lemma}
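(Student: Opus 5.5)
We apply It\^{o}'s formula (\ref{2.4}) in $H^{-1}$ to $\varphi(x)=\|x\|_{-1}^2$ along the solution $X_\epsilon$ of (\ref{4.1}). Since $A_\epsilon$ is Lipschitz in $H^{-1}$ and $f\in L^2(0,T;L^2(\Omega;H^{-1}))$ by (\ref{4.16}), this gives, $\mathbb{P}$-a.s.,
\begin{equation*}
\begin{aligned}
\|X_\epsilon(t)\|_{-1}^2+2\int_0^t\langle A_\epsilon(X_\epsilon),X_\epsilon\rangle_{-1}ds=&\|x\|_{-1}^2+2\int_0^t\langle f,X_\epsilon\rangle_{-1}ds+\int_0^t\|\bar\sigma(X_\epsilon)\|^2_{\mathcal{L}_2(H^{-1})}ds\\
&+2\int_0^t\langle X_\epsilon,\bar\sigma(X_\epsilon)dW\rangle_{-1}.
\end{aligned}
\end{equation*}
We then use the identity (\ref{4.19}) to rewrite the drift term as $\langle (F+\epsilon I)(J_\epsilon X_\epsilon),J_\epsilon X_\epsilon\rangle_2+\epsilon\|A_\epsilon(X_\epsilon)\|_{-1}^2$. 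This produces exactly the two nonnegative integrals on the left of (\ref{4.23}), with the monotone part written with $F$ (the statement's $\beta$ should be read as $F$, which is monotone with $F(0)=0$, so this term is $\geq0$).

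For the right-hand side, we first bound $2\langle f,X_\epsilon\rangle_{-1}\le \|f\|_{-1}^2+\|X_\epsilon\|_{-1}^2$. Next we use the growth bound (\ref{3.11}) in the form
$$\|\bar\sigma(X_\epsilon)\|^2\le (\sigma_1+K_1\|Y\|_{-1}+K_1\|X_\epsilon\|_{-1})^2\le 3\sigma_1^2+3K_1^2\|Y\|_{-1}^2+3K_1^2\|X_\epsilon\|_{-1}^2.$$
Taking expectations kills the martingale term. The stochastic integral is a true martingale because $X_\epsilon\in L^2_W(\Omega;C([0,T];H^{-1}))$ and $\bar\sigma(X_\epsilon)$ has square-integrable Hilbert--Schmidt norm. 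This yields
$$a(t)+\text{(pos. terms)}\le \|x\|_{-1}^2+3\sigma_1^2t+\mathbb{E}\int_0^t\|f\|_{-1}^2+3K_1^2\mathbb{E}\int_0^t\|Y\|_{-1}^2+(3K_1^2+1)\int_0^t a(s)ds,$$
where $a(t)=\mathbb{E}\|X_\epsilon(t)\|_{-1}^2$. The data terms are nondecreasing in $t$, so Gronwall's lemma applied to $a$ plus the positive terms gives (\ref{4.23}).

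For (\ref{4.24}), we return to the pathwise identity and drop the nonnegative drift terms. We then take $\sup_{s\le t}$ before taking expectations. The martingale term is handled by Burkholder--Davis--Gundy (\ref{2.6}) with $p=1$:
$$\mathbb{E}\sup_{s\le t}\Big|\int_0^s\langle X_\epsilon,\bar\sigma dW\rangle\Big|\le c_1\mathbb{E}\Big(\int_0^t\|X_\epsilon\|_{-1}^2\|\bar\sigma(X_\epsilon)\|^2ds\Big)^{1/2}\le \frac14\mathbb{E}\sup_{s\le t}\|X_\epsilon\|^2_{-1}+c_1^2\mathbb{E}\int_0^t\|\bar\sigma(X_\epsilon)\|^2ds.$$
The first term on the right is absorbed into the left side; this is where the factor $2$ in front of $\|x\|^2_{-1}$ and $\mathbb{E}\int\|f\|^2_{-1}$ originates. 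The $\bar\sigma$-integral is again estimated by the $3\sigma_1^2+3K_1^2\|Y\|^2+3K_1^2\|X_\epsilon\|^2$ bound, with $\|X_\epsilon(s)\|^2\le\sup_{r\le s}\|X_\epsilon(r)\|^2$. A second Gronwall argument, applied to $\mathbb{E}\sup_{s\le t}\|X_\epsilon(s)\|_{-1}^2$, gives (\ref{4.24}) with constants of the stated form $6(2c_1^2+1)$ and $12c_1^2K_1^2+6K_1^2$.

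The main obstacle is bookkeeping rather than conceptual. It consists of controlling the random, time-dependent coefficient $\bar\sigma_1$, which contains $\|Y(t,\omega)\|_{-1}$: it must stay inside the expectation and time integral rather than be treated as a constant, which is why the $Y$ term appears separately in both bounds. It also requires care with the absorption step in the BDG estimate. Finiteness of $\mathbb{E}\sup\|X_\epsilon\|^2_{-1}$, needed to absorb, is guaranteed a priori by the existence result for (\ref{4.1}).
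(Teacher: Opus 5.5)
Your argument follows the paper's proof step by step: It\^o's formula for $\|X_\epsilon\|_{-1}^2$, identity (\ref{4.19}), Young's inequality on the $f$-term, the growth bound (\ref{3.11}) and Gronwall, then the supremum, Burkholder--Davis--Gundy with $p=1$, absorption of $\frac14\mathbb{E}\sup\|X_\epsilon\|_{-1}^2$ and a second Gronwall. The part giving (\ref{4.23}) is correct, including your reading of $\beta$ as $F$.

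The gap is your final claim that this yields (\ref{4.24}) with exponent $(12c_1^2K_1^2+6K_1^2)T$. Your bound $2\langle f,X_\epsilon\rangle_{-1}\le\|f\|_{-1}^2+\|X_\epsilon\|_{-1}^2$ puts $\int_0^t\|X_\epsilon\|_{-1}^2ds$ into the supremum estimate with coefficient $1$, and the absorption step doubles it. Carrying out your own bookkeeping gives
\begin{equation*}
\mathbb{E}\sup_{s\le t}\|X_\epsilon(s)\|_{-1}^2\le C_T+(12c_1^2K_1^2+6K_1^2+2)\int_0^t\mathbb{E}\sup_{r\le s}\|X_\epsilon(r)\|_{-1}^2\,ds ,
\end{equation*}
where $C_T$ is the bracket on the right of (\ref{4.24}). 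Hence the exponent is $(12c_1^2K_1^2+6K_1^2+2)T$. This is exactly the coefficient in the paper's own (\ref{4.31}); the paper then drops the $+2$ when applying Gronwall, and you reproduce that slip by asserting the stated constants.

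The slip cannot be repaired by a cleverer treatment of the $f$-term, because the stated inequality is false. Take $\sigma\equiv0$ (so $\sigma_1=0$ and any $K_1>0$ is admissible), $x=0$, $\beta(r)=\delta r$, and $Y(t)=t\psi$, where $\psi$ is the first Dirichlet eigenfunction, $-\Delta\psi=\lambda_1\psi$. Then $F(r)=\delta r$, $f(t)=-(1+\delta\lambda_1t)\psi$, and $X_\epsilon(t)=a(t)\psi$ with $\dot a=-\mu a-(1+\delta\lambda_1t)$, $a(0)=0$, $\mu=(\delta+\epsilon)\lambda_1/(1+\epsilon(\delta+\epsilon)\lambda_1)$. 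Fix $T=4$ and let $\delta,\epsilon\to0$. The left side of (\ref{4.24}) is at least $a(4)^2\|\psi\|_{-1}^2\to16\|\psi\|_{-1}^2$. The right side, after letting $K_1\downarrow0$, equals $2\int_0^4\|f\|_{-1}^2\,dt\to8\|\psi\|_{-1}^2$.

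So (\ref{4.24}) holds only with the corrected exponent $(12c_1^2K_1^2+6K_1^2+2)T$, and that is the version you should state and prove. The correction is harmless later, since only bounds uniform in $\epsilon$ are used.
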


\begin{proof}
By It\^{o}'s formula we have
\begin{equation}\label{4.25}
\begin{aligned}
d\|X_\epsilon(t)\|_{-1}^2=&-2\langle X_\epsilon(t),A_\epsilon(X_\epsilon(t))\rangle_{-1}dt+2\langle f(t),X_{\epsilon}(t) \rangle_{-1}dt\\
&+\mathrm{Tr}\left[\bar{\sigma}(X_\epsilon(t))\bar{\sigma}(X_\epsilon(t))^*\right]dt+2\langle X_\epsilon(t),\bar{\sigma}(X_\epsilon(t))dW(t)\rangle_{-1}.
\end{aligned}
\end{equation}
Owing to (\ref{4.21}), one has
\begin{equation}\label{4.26}
\begin{aligned}
&d\|X_\epsilon(t)\|_{-1}^2+2\langle(F+\epsilon I)(J_\epsilon(X_\epsilon(t))),J_\epsilon(X_\epsilon(t))\rangle_2dt+2\epsilon\|A_\epsilon(X_\epsilon(t))\|_{-1}^2dt\\
=&\text{Tr}[\sigma(X_\epsilon(t))\sigma(X_\epsilon(t))^*]dt+2\langle f(t),X_{\epsilon}(t) \rangle_{-1}dt+2\langle X_\epsilon(t),\sigma(X_\epsilon(t))dW(t)\rangle_{-1}.
\end{aligned}
\end{equation}
Integrating on $[0,t]$ and taking expectation we get that
\begin{equation}\label{4.27}
\begin{aligned}
&\mathbb{E}\left[\|X_\epsilon(t)\|_{-1}^2\right]+\mathbb{E}\left[\int^{t}_{0}\{2\langle(F+\epsilon I)(J_\epsilon(X_\epsilon(s))),J_\epsilon(X_\epsilon(s))\rangle_2+2\epsilon\|A_\epsilon(X_\epsilon(s))\|_{-1}^2\}ds\right]\\
=&\|x\|^2_{-1}+\mathbb{E}\left[\int^{t}_{0}\text{ Tr }[\sigma(X_\epsilon(s))\sigma(X_\epsilon(s))^*]ds+\int^{t}_{0}2\langle f(s),X_{\epsilon}(s) \rangle_{-1}ds\right].
\end{aligned}
\end{equation}
Keeping in mind the assumption (\ref{3.9}) we obtain that
\begin{equation}\label{4.28}
\begin{aligned}
&\mathbb{E}\left[\|X_\epsilon(t)\|_{-1}^2\right]+\mathbb{E}\left[\int^{t}_{0}\{2\langle(F+\epsilon I)(J_\epsilon(X_\epsilon(s))),J_\epsilon(X_\epsilon(s))\rangle_2+2\epsilon\|A_\epsilon(X_\epsilon(s))\|_{-1}^2\}ds\right]\\
\leq& \| x\|^{2}_{-1}+3\mathbb{E}\left[\int_{0}^{t}(\sigma_{1}^{2}+K^2_1(\| X_{\epsilon}(s) \|^2_{-1}+\| Y(s) \|^{2}_{-1}))ds\right]\\
&+2\mathbb{E}\left[\int^t_0\| X_{\epsilon}(s) \|^2_{-1}ds\right]^{\frac{1}{2}}\mathbb{E}\left[\int^t_0\| f(s) \|^2_{-1}ds\right]^{\frac{1}{2}}\\
\leq& \| x\|^{2}_{-1}+3\sigma^2_{1}t+3K^2_1\mathbb{E}\left[\int_{0}^{t} \| Y(s) \|^2_{-1}ds\right]\\
&+(3K^2_{1}+1)\mathbb{E}\left[\int^{t}_{0}\| X_{\epsilon}(s) \|^2_{-1}ds\right]+\mathbb{E}\left[\int^{t}_{0}\| f(s) \|^2_{-1}ds\right].
\end{aligned}
\end{equation}
From Gronwall's Lemma we get (\ref{4.23}). On the other hand going back to (\ref{4.26}), taking sup before taking expectation to get\\
\begin{equation}\label{4.29}
\begin{aligned}
\mathbb{E}\left[\sup_{t\in[0,T]}\|X_{\epsilon}(t)\|_{-1}^2\right]\leq& \| x\|^{2}_{-1}+3\mathbb{E}\left[\int_{0}^{t}(\sigma_{1}^{2}+K^2_1(\| X_{\epsilon}(s) \|^2_{-1}+\| Y(s) \|^{2}_{-1}))ds\right]+\mathbb{E}\left[\int^{T}_{0}\| f(s) \|^2_{-1}ds\right]\\
&+\mathbb{E}\left[\int^{T}_{0}\| X_{\epsilon}(s) \|^2_{-1}ds\right]+2\mathbb{E}\left[\sup_{r\in[0,T]}\left|\int_0^r\langle X_\epsilon(s),\bar{\sigma}(X_\epsilon(s))dW(s)\rangle_{-1}\right|\right].
\end{aligned}
\end{equation}
From Burkholder-Davis-Gundy inequality, we have
\begin{equation}\label{4.30}
\begin{aligned}
&\mathbb{E}\left[ \sup_{r\in[0,t]}\left|\int_{0}^{r}\langle X_{\epsilon}(s),\bar{\sigma}(X_{\epsilon}(s))dW(s)\rangle_{-1}\right|\right] \leq c_1\mathbb{E}\left[\left(\int_0^t|\bar{\sigma}(X_\epsilon(s))^*X_\epsilon(s)|_{-1}^2ds\right)^{1/2} \right]\\
\leq&\sqrt{3}c_{1}\mathbb{E}\left[\left(\int_{0}^{t}(\sigma_{1}^{2}+K^2_1\|Y(s)\|_{-1}^{2}+K_{1}^{2}\|X_{\epsilon}(s)\|_{-1}^{2})\|X_{\epsilon}(s)\|_{-1}^{2}ds\right)^{1/2}\right]\\
\leq&\sqrt{3}c_1\mathbb{E}\left[\left(\sup_{r\in[0,t]}\|X_\epsilon(r)\|_{-1}^2\int_0^t(\sigma_{1}^{2}+K^2_1\|Y(s)\|_{-1}^{2}+K_{1}^{2}\|X_{\epsilon}(s)\|_{-1}^{2})ds\right)^{1/2}\right] \\
\leq&\frac{1}{4}\mathbb{E}\left[\sup_{r\in[0,t]}\|X_{\epsilon}(r)\|_{-1}^{2}\right]+3c_{1}^{2}\mathbb{E}\left[\int_{0}^{t}(\sigma_{1}^{2}+K^2_1\|Y(s)\|_{-1}^{2}+K_{1}^{2}\|X_{\epsilon}(s)\|_{-1}^{2})ds\right].
\end{aligned}
\end{equation}
Going back to (\ref{4.29}) we have
\begin{equation}\label{4.31}
\begin{aligned}
\mathbb{E}\left[\sup_{t\in[0,T]}\|X_{\epsilon}(t)\|_{-1}^2\right]&\leq 2\| x \|^2_{-1}+6(2c^2_1+1)\sigma^2_{1}T+6(2c^2_1+1)K^2_1\mathbb{E}\left[\int_{0}^{T} \| Y(s) \|^2_{-1}ds\right]\\
&+2\mathbb{E}\left[\int^{T}_{0}\| f(s) \|^2_{-1}ds\right]+(12c^2_1K^2_1+6K^2_{1}+2)\mathbb{E}\left[\int^{T}_{0}\| X_{\epsilon}(s) \|^2_{-1}ds\right],
\end{aligned}
\end{equation}
by Gronwall's Lemma get (\ref{4.24}).
\end{proof}

\begin{lemma}\label{lem3.5}
For each $x\in L^{2}$, $T>0$ and each $\epsilon \in(0,\frac{1}{2\vee 3K})$ we have
\begin{equation}\label{4.32}
\begin{aligned}
&\mathbb{E}[|X_{\epsilon}(t)|_{2}^{2}]+\frac{1}{K}\mathbb{E}\left[  \int_{0}^{t}\int_{\mathcal{O}} |\nabla \beta(J_{\epsilon}(X_{\epsilon}(s))+Y)|^2 d\xi ds \right] \\
\leq&(|x|^2_2+3\sigma^2_{2}t+3K^2_2\mathbb{E}\left[\int_{0}^{t} | Y(s) |^2_{2}ds\right]+\mathbb{E}\left[\int^{t}_{0}| f(s) |^2_{2}ds\right]+2\mathbb{E}\left[  \int_{0}^{t} |\Delta\beta(Y)|^2_2 ds \right]\\
&+(K+1)\mathbb{E}\left[  \int_{0}^{t}|\nabla Y|^2_2 ds \right])e^{(3K_{2}^{2}+4)t},\quad\forall t\in[0,T].
\end{aligned}
\end{equation}
\begin{proof}
Applying It\^{o}'s formula to $|X_{\epsilon}(t)|_{2}^{2}$, yields
\begin{equation}\label{4.33}
\begin{aligned}
d|X_{\epsilon}(t)|_{2}^{2}=&-2\langle A_{\epsilon}(X_{\epsilon}(t)),X_{\epsilon}(t)\rangle_{2}dt+\|\bar{\sigma}(X_{\epsilon}(t))\|_{\mathcal{L}_{2}(H^{-1},L^{2})}^{2}dt  \\
&+2\langle f(t),X_{\epsilon}(t) \rangle dt+2\langle X_{\epsilon}(t)),\bar{\sigma}(X_{\epsilon}(t))dW(t)\rangle_{2}.
\end{aligned}
\end{equation}
By (\ref{4.20})
\begin{equation}\label{4.34}
\begin{aligned}
&d|X_{\epsilon}(t)|_{2}^{2}+2\langle-\Delta(F+\epsilon I)(J_\epsilon(X_\epsilon(t))),J_\epsilon(X_\epsilon(t))\rangle_2dt+2\epsilon|A_\epsilon(X_\epsilon(t))|_{2}^2dt\\
=&\|\bar{\sigma}(X_{\epsilon}(t))\|_{\mathcal{L}_{2}(H^{-1},L^{2})}^{2}dt+2\langle f(t),X_{\epsilon}(t) \rangle_{2}dt+2\langle X_{\epsilon}(t)),\bar{\sigma}(X_{\epsilon}(t))dW(t)\rangle_{2}.
\end{aligned}
\end{equation}
Integrating on $[0,T]$ and taking expectation, yields
\begin{equation}\label{4.35}
\begin{aligned}
&\mathbb{E}\left[ |X_{\epsilon}(t)|_{2}^{2} \right]+2\mathbb{E}\left[  \int_{0}^{t}\langle -\Delta(F+\epsilon I)(J_{\epsilon}(X_{\epsilon}(s))),J_{\epsilon}(X_{\epsilon}(s))\rangle_{2}ds \right]\\
\leq& |x|^2_2+\mathbb{E}\left[ \int_{0}^{t}\|\bar{\sigma}(X_{\epsilon}(s))\|_{\mathcal{L}_{2}(H^{-1},L^{2})}^{2}ds \right]+2\mathbb{E}\left[ \int^t_{0}\langle f(s),X_{\epsilon}(s) \rangle_{2} ds\right]\\
\leq&|x|^2_2+3\sigma^2_{2}t+3K^2_2\mathbb{E}\left[\int_{0}^{t} | Y(s) |^2_{2}ds\right]+\mathbb{E}\left[\int^{t}_{0}| f(s) |^2_{2}ds\right]+(3K^2_{2}+1)\mathbb{E}\left[\int^{t}_{0}| X_{\epsilon}(s) |^2_{2}ds\right].
\end{aligned}
\end{equation}
The item $$\mathbb{E}\left[  \int_{0}^{t}\langle -\Delta(F+\epsilon I)(J_{\epsilon}(X_{\epsilon}(s))),J_{\epsilon}(X_{\epsilon}(s))\rangle_{2}ds \right],$$ due to the presence of $Y$ is difficult to ensure is nonnegative, so by (\ref{4.18}), we add the $$2\mathbb{E}\left[  \int_{0}^{t}|\langle -\Delta\beta(Y),J_{\epsilon}(X_{\epsilon}(s)) \rangle_{2}|ds \right]+K\mathbb{E}\left[  \int_{0}^{t}|\nabla Y|^2 ds \right],$$ on both side of the above inequality.

\begin{equation*}
\begin{aligned}
&\mathbb{E}\left[ |X_{\epsilon}(t)|_{2}^{2} \right]+\frac{1}{K}\mathbb{E}\left[  \int_{0}^{t}\int_{\mathcal{O}} |\nabla \beta(J_{\epsilon}(X_{\epsilon}(s))+Y)|^2 d\xi ds \right]\\
\leq&\mathbb{E}\left[ |X_{\epsilon}(t)|_{2}^{2} \right]+2\mathbb{E}\left[  \int_{0}^{t}\langle -\Delta(F+\epsilon I)(J_{\epsilon}(X_{\epsilon}(s))),J_{\epsilon}(X_{\epsilon}(s))\rangle_{2}ds \right]\\
&+2\mathbb{E}\left[  \int_{0}^{t}|\langle -\Delta\beta(Y),J_{\epsilon}(X_{\epsilon}(s)) \rangle_{2}|ds \right]+K\mathbb{E}\left[  \int_{0}^{t}|\nabla Y|^2_2 ds \right]\\
\leq& |x|^2_2+3\sigma^2_{2}t+3K^2_2\mathbb{E}\left[\int_{0}^{t} | Y(s) |^2_{2}ds\right]+\mathbb{E}\left[\int^{t}_{0}| f(s) |^2_{2}ds\right]+(3K^2_{2}+1)\mathbb{E}\left[\int^{t}_{0}| X_{\epsilon}(s) |^2_{2}ds\right]\\
&+2\mathbb{E}\left[  \int_{0}^{t}|\langle -\Delta\beta(Y),J_{\epsilon}(X_{\epsilon}(s)) \rangle_{2}|ds \right]+K\mathbb{E}\left[  \int_{0}^{t}|\nabla Y|^2_2 ds \right]\\
\leq&|x|^2_2+3\sigma^2_{2}t+3K^2_2\mathbb{E}\left[\int_{0}^{t} | Y(s) |^2_{2}ds\right]+\mathbb{E}\left[\int^{t}_{0}| f(s) |^2_{2}ds\right]+(3K^2_{2}+1)\mathbb{E}\left[\int^{t}_{0}| X_{\epsilon}(s) |^2_{2}ds\right]\\
&+\mathbb{E}\left[  \int_{0}^{t} |\Delta\beta(Y)|^2_2 ds \right]+\mathbb{E}\left[  \int_{0}^{t} |J_{\epsilon}(X_{\epsilon}(s))|^2_2 ds \right]+K\mathbb{E}\left[  \int_{0}^{t}|\nabla Y|^2_2 ds \right]\\
\leq&|x|^2_2+3\sigma^2_{2}t+3K^2_2\mathbb{E}\left[\int_{0}^{t} | Y(s) |^2_{2}ds\right]+\mathbb{E}\left[\int^{t}_{0}| f(s) |^2_{2}ds\right]+(3K^2_{2}+1)\mathbb{E}\left[\int^{t}_{0}| X_{\epsilon}(s) |^2_{2}ds\right]\\
&+\mathbb{E}\left[  \int_{0}^{t} |\Delta\beta(Y)|^2_2 ds \right]+3\mathbb{E}\left[  \int_{0}^{t} |X_{\epsilon}(s)|^2_2 ds \right]+3\epsilon^2\mathbb{E}\left[  \int_{0}^{t} |\Delta\beta(Y)|^2_2 ds \right]\\
&+K(1+\frac{3\epsilon}{2})\mathbb{E}\left[  \int_{0}^{t}|\nabla Y|^2_2 ds \right].\\
\end{aligned}
\end{equation*}
Then we take $\epsilon$ small enough
\begin{equation*}
\begin{aligned}
&\mathbb{E}\left[ |X_{\epsilon}(t)|_{2}^{2} \right]+\frac{1}{K}\mathbb{E}\left[  \int_{0}^{t}\int_{\mathcal{O}} |\nabla \beta(J_{\epsilon}(X_{\epsilon}(s))+Y)|^2 d\xi ds \right]\\
\leq&|x|^2_2+3\sigma^2_{2}t+3K^2_2\mathbb{E}\left[\int_{0}^{t} | Y(s) |^2_{2}ds\right]+\mathbb{E}\left[\int^{t}_{0}| f(s) |^2_{2}ds\right]+(3K^2_{2}+4)\mathbb{E}\left[\int^{t}_{0}| X_{\epsilon}(s) |^2_{2}ds\right]\\
&+2\mathbb{E}\left[  \int_{0}^{t} |\Delta\beta(Y)|^2_2 ds \right]+(K+1)\mathbb{E}\left[  \int_{0}^{t}|\nabla Y|^2_2 ds \right],\\
\end{aligned}
\end{equation*}
by Gronwall inequality get (\ref{4.32}).
\end{proof}
\end{lemma}

\begin{lemma}\label{lem3.6}
Assume that Hypothesis 2 holds. Then for each $x\in L^2$ and each $\epsilon \in(0,\frac{1}{2\vee 3K})$ we have
\begin{equation}\label{4.36}
\begin{aligned}
\mathbb{E}\left[\int_{0}^{t}\|A_{\epsilon}(X_{\epsilon}(s))\|_{-1}^{2}ds\right]\leq& (2K+1)(|x|^2_2+3\sigma^2_{2}t+3K^2_2\mathbb{E}\left[\int_{0}^{t} | Y(s) |^2_{2}ds\right]+\mathbb{E}\left[\int^{t}_{0}| f(s) |^2_{2}ds\right]\\
&+2\mathbb{E}\left[  \int_{0}^{t} |\Delta\beta(Y)|^2_2 ds \right]+(2K+1)\mathbb{E}\left[  \int_{0}^{t}|\nabla Y|^2_2 ds \right])e^{(3K_{2}^{2}+4)t}.
\end{aligned}
\end{equation}
\end{lemma}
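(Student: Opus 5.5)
The natural route is to bound $\|A_\epsilon(X_\epsilon)\|_{-1}$ pointwise by the $H^1_0$-norm of $(F+\epsilon I)(J_\epsilon(X_\epsilon))$, and then control that quantity with the gradient estimate of Lemma \ref{lem3.5}. By (\ref{4.10}), $A_\epsilon(x)=-\Delta(F+\epsilon I)(J_\epsilon(x))$, and $(F+\epsilon I)(J_\epsilon(x))\in H^1_0$. Since $-\Delta$ is the Riesz isometry from $H^1_0$ onto $H^{-1}$, we have
\begin{equation*}
\|A_\epsilon(x)\|_{-1}^2=\|(F+\epsilon I)(J_\epsilon(x))\|_1^2=\int_{\mathcal{O}}|\nabla(\beta(J_\epsilon(x)+Y)-\beta(Y)+\epsilon J_\epsilon(x))|^2d\xi .
\end{equation*}
I will not estimate this by splitting off $\beta(Y)$ and $\epsilon J_\epsilon$ separately, because that would require $\epsilon^2|\nabla J_\epsilon|_2^2$ with no control. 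Instead I will rerun the computation leading to (\ref{4.18}) with the full operator.

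Set $z=J_\epsilon(X_\epsilon(s))$ and $w=(F+\epsilon I)(z)$. Using $|\beta'|\le K$, the pointwise inequality $|\nabla\beta(z+Y)|^2\le K\nabla\beta(z+Y)\cdot\nabla(z+Y)$ holds. Adding the $\epsilon$-terms gives
\begin{equation*}
|\nabla w|^2\le \max(K,1)\,\nabla w\cdot\nabla z+\text{(terms in }\nabla\beta(Y),\nabla Y),
\end{equation*}
where the $Y$-terms are absorbed by Young's inequality. The choice $\epsilon<1/(2\vee3K)$ is what keeps $\epsilon\le\epsilon K$-type cross terms small. Integrating, and using $\int\nabla w\cdot\nabla z=\langle -\Delta w,z\rangle_2$, we obtain
\begin{equation*}
\|A_\epsilon(X_\epsilon)\|_{-1}^2\le (2K+1)\Big(\langle-\Delta(F+\epsilon I)(z),z\rangle_2+|\langle\Delta\beta(Y),z\rangle_2|+\tfrac{K+1}{2}|\nabla Y|_2^2\Big).
\end{equation*}
Equivalently, I bound $\|A_\epsilon\|_{-1}^2$ by a multiple of $\frac1K\int|\nabla\beta(z+Y)|^2$ plus the same $Y$-dependent remainder that was added in the proof of Lemma \ref{lem3.5}.

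Next I integrate in time and take expectations. The bracket is exactly the quantity controlled in the proof of Lemma \ref{lem3.5}, namely the left side of (\ref{4.35}) augmented by the terms added there. So I will follow that chain of inequalities: bound $|\langle\Delta\beta(Y),z\rangle|$ by $\frac12|\Delta\beta(Y)|_2^2+\frac12|z|_2^2$, bound $|z|_2$ by (\ref{4.15}), and bound $\mathbb{E}\int_0^t|X_\epsilon|_2^2$ through the Gronwall estimate (\ref{4.32}). Collecting constants then produces the prefactor $(2K+1)$, the exponential $e^{(3K_2^2+4)t}$, and the coefficient $(2K+1)$ in front of $\mathbb{E}\int|\nabla Y|_2^2$.

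The main obstacle is the first step. One needs a clean pointwise comparison $|\nabla w|^2\lesssim \nabla w\cdot\nabla z+\text{(Y-terms)}$ in which $\epsilon$ enters harmlessly. This requires handling the chain rule for the Lipschitz function $\beta$ in $H^1$, which holds a.e. with $\beta'\in[0,K]$, and tracking the cross term $\nabla\beta(Y)\cdot\nabla z$. That cross term must be converted into $\langle-\Delta\beta(Y),z\rangle_2$ by integrating by parts with $z\in H^1_0$, using the assumption $\Delta\beta(Y)\in L^2$. The remaining work is bookkeeping of constants.
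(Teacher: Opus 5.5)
\textbf{There is a gap in the key step.} Your overall plan is sound: write $\|A_\epsilon(X_\epsilon)\|_{-1}^2=\|w\|_1^2$ with $w=(F+\epsilon I)(z)$ and $z=J_\epsilon(X_\epsilon)$, dominate it by a multiple of the quantity controlled along the chain in the proof of Lemma~\ref{lem3.5}, then integrate. But the comparison that carries all the content is only asserted, and the sketch you give for it is wrong. Set $a=\beta'(z+Y)$ and $b=\beta'(Y)$, both in $[0,K]$; then $\nabla w=(a+\epsilon)\nabla z+(a-b)\nabla Y$. Now take $Y\equiv0$ (admissible, with $g=0$) and $\beta(r)=Kr$. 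Then $w=(K+\epsilon)z$, all $Y$-terms vanish, and $\|w\|_1^2=(K+\epsilon)\langle-\Delta w,z\rangle_2$. This exceeds $\max(K,1)\langle-\Delta w,z\rangle_2$ whenever $K\ge1$, so your inequality $|\nabla w|^2\le\max(K,1)\,\nabla w\cdot\nabla z+(Y\text{-terms})$ fails even after integration. Your ``equivalent'' formulation is false outright. Take $Y\equiv0$ and $\beta\equiv0$ on $[-1,1]$; then any nonzero $z\in H^1_0$ with $|z|\le1$ gives $\|A_\epsilon\|_{-1}^2=\epsilon^2\|z\|_1^2>0$, while $\int_{\mathcal O}|\nabla\beta(z)|^2d\xi$ and the $Y$-remainder both vanish. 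The $\epsilon z$ part of $w$ is invisible to $\nabla\beta(z+Y)$. So $\epsilon\langle-\Delta w,z\rangle_2$ must appear explicitly, and the constant in front of $\langle-\Delta w,z\rangle_2$ must be strictly larger than $K+\epsilon$, with room to absorb the $\nabla z\cdot\nabla Y$ cross terms.

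\textbf{How to close it.} Your stated reason for not splitting is mistaken, and splitting is exactly what the paper does. From $|\nabla w|^2=\nabla w\cdot\nabla F(z)+\epsilon\nabla w\cdot\nabla z\le\frac12|\nabla w|^2+\frac12|\nabla F(z)|^2+\epsilon\nabla w\cdot\nabla z$ one gets $\|w\|_1^2\le2\int_{\mathcal O}|\nabla\beta(z+Y)|^2d\xi+2K^2|\nabla Y|_2^2+2\epsilon\langle-\Delta w,z\rangle_2$. The first term is controlled by (\ref{4.38}). The term $2\mathbb{E}\int_0^t\langle-\Delta w,z\rangle_2ds$ is bounded by the same right-hand side along the chain of Lemma~\ref{lem3.5}, because the other terms on that left-hand side are nonnegative. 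This gives $(2K+\epsilon)(\cdots)e^{(3K_2^2+4)t}+2K^2\mathbb{E}\int_0^t|\nabla Y|_2^2ds$, hence (\ref{4.36}). No separate control of $\epsilon^2\|z\|_1^2$ is needed. In fact, the computation behind (\ref{4.18}) already yields $\epsilon\|z\|_1^2\le\langle-\Delta w,z\rangle_2+\langle-\Delta\beta(Y),z\rangle_2+\frac{K}{2}|\nabla Y|_2^2$. If you prefer your single-comparison route, your final displayed bound with $2K+1$ and $\frac{K+1}{2}$ is actually true. Proving it means checking that $(2K+1)\big[(a+\epsilon)|\nabla z|^2+a\,\nabla z\cdot\nabla Y+\frac{K+1}{2}|\nabla Y|^2\big]-|\nabla w|^2$ is a nonnegative quadratic form in $(\nabla z,\nabla Y)$, uniformly in $a,b\in[0,K]$; the bracket equals $\nabla w\cdot\nabla z+\nabla\beta(Y)\cdot\nabla z+\frac{K+1}{2}|\nabla Y|^2$ pointwise. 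That is a discriminant computation which uses the smallness of $\epsilon$. It is the actual proof of the step rather than bookkeeping, and it is missing from your proposal.
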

\begin{proof}
Note first that
\begin{equation}\label{4.37}
\begin{aligned}
\mathbb{E}\left[\int_{0}^{t}\|A_{\epsilon}(X_{\epsilon}(s))\|_{-1}^{2}ds\right] =&\mathbb{E}\left[\int_{0}^{t}\langle -\Delta(F+\epsilon I)(J_{\epsilon}(X_{\epsilon}(s))),(F+\epsilon I)(J_{\epsilon}(X_{\epsilon}(s)))\rangle_{2}ds\right]  \\
=&\mathbb{E}\left[\int_{0}^{t}\int_{\mathcal{O}} \nabla(F+\epsilon I)(J_{\epsilon}(X_{\epsilon}(s)))\cdot\nabla(F+\epsilon I)(J_{\epsilon}(X_{\epsilon}(s)))d\xi ds\right]\\
\leq& 2\mathbb{E}\left[\int_{0}^{t}\int_{\mathcal{O}} \nabla\beta(J_{\epsilon}(X_{\epsilon}(s))+Y)\cdot\nabla\beta(J_{\epsilon}(X_{\epsilon}(s))+Y)d\xi ds\right]\\
&+2\mathbb{E}\left[\int_{0}^{t}\int_{\mathcal{O}} |\nabla\beta(Y)|^2 d\xi+\epsilon\langle -\Delta(F+\epsilon I)(J_{\epsilon}(X_{\epsilon}(s))),J_{\epsilon}(X_{\epsilon}(s))\rangle_{2}ds\right].\\
\end{aligned}
\end{equation}
From the proof process of Lemma 4.4, we can know that
\begin{equation}\label{4.38}
\begin{aligned}
&\frac{1}{K}\mathbb{E}\left[  \int_{0}^{t}\int_{\mathcal{O}} |\nabla \beta(J_{\epsilon}(X_{\epsilon}(s))+Y)|^2 d\xi ds \right]\\
\leq&(|x|^2_2+3\sigma^2_{2}t+3K^2_2\mathbb{E}\left[\int_{0}^{t} | Y(s) |^2_{2}ds\right]+\mathbb{E}\left[\int^{t}_{0}| f(s) |^2_{2}ds\right]+2\mathbb{E}\left[  \int_{0}^{t} |\Delta\beta(Y)|^2_2 ds \right]\\
&+(K+1)\mathbb{E}\left[  \int_{0}^{t}|\nabla Y|^2_2 ds \right])e^{(3K_{2}^{2}+4)t},\quad\forall t\in[0,T].
\end{aligned}
\end{equation}
Therefore
\begin{equation}\label{4.39}
\begin{aligned}
&\mathbb{E}\left[\int_{0}^{t}\|A_{\epsilon}(X_{\epsilon}(s))\|_{-1}^{2}ds\right]\\
\leq& (2K+\epsilon)(|x|^2_2+3\sigma^2_{2}t+3K^2_2\mathbb{E}\left[\int_{0}^{t} | Y(s) |^2_{2}ds\right]+\mathbb{E}\left[\int^{t}_{0}| f(s) |^2_{2}ds\right]+2\mathbb{E}\left[  \int_{0}^{t} |\Delta\beta(Y)|^2_2 ds \right]\\
&+(K+1)\mathbb{E}\left[  \int_{0}^{t}|\nabla Y|^2_2 ds \right])e^{(3K_{2}^{2}+4)t}+2\mathbb{E}\left[\int_{0}^{t}\int_{\mathcal{O}} |\nabla\beta(Y)|^2 d\xi\right]\\
\leq& (2K+1)(|x|^2_2+3\sigma^2_{2}t+3K^2_2\mathbb{E}\left[\int_{0}^{t} | Y(s) |^2_{2}ds\right]+\mathbb{E}\left[\int^{t}_{0}| f(s) |^2_{2}ds\right]+2\mathbb{E}\left[  \int_{0}^{t} |\Delta\beta(Y)|^2_2 ds \right]\\
&+(2K+1)\mathbb{E}\left[  \int_{0}^{t}|\nabla Y|^2_2 ds \right])e^{(3K_{2}^{2}+4)t},
\end{aligned}
\end{equation}
and the proof is complete.
\end{proof}

\begin{theorem}\label{th3.1}
Assume that the above assumptions are fulfilled. Then for each $T>0$ and all $x\in L^2$ problem (\ref{3.1}) has a unique strong solution. Moreover, for $x\in H^{-1}$ problem (\ref{3.1}) has a unique generalized solution.
\end{theorem}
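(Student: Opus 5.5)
We prove the theorem for the transformed problem (\ref{3.8}) and then set $X=\bar X+Y$. Since $Y\in L^2_W(\Omega;C([0,T];H^1))$, $Y(t)=g(t)$ on $\Sigma$ and $dY=b\,dt$, a strong (resp. generalized) solution $\bar X$ of (\ref{3.8}) with initial datum $x-y$ yields a strong (resp. generalized) solution $X=\bar X+Y$ of (\ref{3.1}). Indeed, $\beta(X)=F(\bar X)+\beta(Y)$, with $\beta(Y)\in L^2(0,T;L^2(\Omega;H^1))$ because $\beta$ is Lipschitz. Uniqueness follows from Proposition \ref{prop3.1}, which transfers back to (\ref{3.1}) since $Y$ is fixed. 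It therefore remains to construct $\bar X$ as the limit of the solutions $X_\epsilon$ of (\ref{4.1}).

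\textbf{Step 1 (Cauchy property for $x\in L^2$).} Apply It\^o's formula to $\|X_\epsilon(t)-X_\eta(t)\|_{-1}^2$. The drift term is $-2\langle A_\epsilon(X_\epsilon)-A_\eta(X_\eta),X_\epsilon-X_\eta\rangle_{-1}$; the $f$ terms cancel. We expand this drift with Lemma \ref{lem3.3}.
\begin{itemize}
\item The first term on the right of (\ref{4.21}) is nonnegative by monotonicity of $F+\epsilon I$.
\item The cross term $(\epsilon-\eta)\langle J_\eta,J_\epsilon-J_\eta\rangle_2$ is bounded by $(\epsilon+\eta)$ times $L^2$ norms of $J$. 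These are controlled through (\ref{4.15}) and Lemma \ref{lem3.5}.
\item The last term satisfies
\[
-\langle A_\epsilon-A_\eta,\epsilon A_\epsilon-\eta A_\eta\rangle_{-1}\le C(\epsilon+\eta)\big(\|A_\epsilon\|_{-1}^2+\|A_\eta\|_{-1}^2\big),
\]
which Lemma \ref{lem3.6} bounds in $L^1(\Omega\times(0,T))$ uniformly in $\epsilon$.
\end{itemize}
The noise term is handled by (\ref{3.10}) and the BDG inequality (\ref{2.6}), exactly as in (\ref{4.30}). Gronwall's lemma then gives
\[
\mathbb E\sup_{t\le T}\|X_\epsilon-X_\eta\|_{-1}^2\le C(\epsilon+\eta).
\]
Hence $X_\epsilon\to\bar X$ in $L^2_W(\Omega;C([0,T];H^{-1}))$.

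\textbf{Step 2 (identification of the limit).} From Lemma \ref{lem3.5}, $X_\epsilon$ is bounded in $L^2(0,T;L^2(\Omega;L^2))$, and $\nabla\beta(J_\epsilon(X_\epsilon)+Y)$ is bounded in $L^2$; this yields bounds on $F(J_\epsilon X_\epsilon)$ in $L^2(0,T;L^2(\Omega;H^1_0))$. Moreover $X_\epsilon-J_\epsilon X_\epsilon=\epsilon A_\epsilon(X_\epsilon)\to 0$ in $L^2(\Omega\times(0,T);H^{-1})$ by Lemma \ref{lem3.6}, so $J_\epsilon X_\epsilon\to\bar X$ there. Along a subsequence, $(F+\epsilon I)(J_\epsilon X_\epsilon)\rightharpoonup \eta$ weakly in $L^2(\Omega\times(0,T);H^1_0)$ and $A_\epsilon X_\epsilon\rightharpoonup -\Delta\eta$.

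To show $\eta=F(\bar X)$ we use the maximal monotonicity of the realization of $u\mapsto -\Delta F(t,\omega,u)$ in $L^2(\Omega\times(0,T);H^{-1})$. The Proposition before (\ref{4.10}), applied with $\epsilon$ fixed and then passing to the limit, gives the needed monotonicity. The usual $\limsup$ argument, $\limsup\mathbb E\int\langle A_\epsilon X_\epsilon,J_\epsilon X_\epsilon\rangle_{-1}\le\mathbb E\int\langle -\Delta\eta,\bar X\rangle_{-1}$, is obtained from the It\^o energy identity (\ref{4.27}) and weak lower semicontinuity of $\mathbb E\|\bar X(t)\|_{-1}^2$. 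Alternatively, since $F$ is Lipschitz, one can pass to the limit directly: $F(J_\epsilon X_\epsilon)\to F(\bar X)$ in $L^2$ if $J_\epsilon X_\epsilon\to \bar X$ strongly in $L^2$. I would try the monotonicity argument first, and this step is the main obstacle, because $F$ depends on $(t,\omega)$ through $Y$. The $\sigma$ and $f$ terms pass to the limit by (\ref{3.10}), so $\bar X$ satisfies the integral equation and $F(\bar X)\in L^2(0,T;L^2(\Omega;H^1_0))$.

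\textbf{Step 3 ($x\in H^{-1}$).} Take $x_n\in L^2$ with $x_n\to x$ in $H^{-1}$, and let $X_n$ be the corresponding strong solutions. The It\^o argument of Proposition \ref{prop3.1}, combined with BDG, gives
\[
\mathbb E\sup_t\|X_n-X_m\|_{-1}^2\le C\|x_n-x_m\|_{-1}^2 .
\]
So $(X_n)$ is Cauchy, its limit is the generalized solution, and by the same estimate this limit is independent of the chosen sequence, which gives uniqueness.
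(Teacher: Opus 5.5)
Your reduction to (\ref{3.8}) and back, your Step 1 and your Step 3 follow the paper's proof (its Steps I, II and IV). The gap is in Step 2, the identification $\eta=F(\bar X)$. You flag this yourself as the main obstacle and leave it open, and neither route you sketch closes it.

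The direct route needs $J_\epsilon(X_\epsilon)\to\bar X$ strongly in $L^2((0,T)\times\Omega\times\mathcal O)$, and this is not available. $\beta$, hence $F(t,\omega,\xi,\cdot)$, is only monotone, not strongly monotone. So the It\^o estimate controls $J_\epsilon(X_\epsilon)-J_\eta(X_\eta)$ in $L^2$ only through $\epsilon|J_\epsilon(X_\epsilon)-J_\eta(X_\eta)|_2^2$, which gives nothing uniform.

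The Minty route requires maximal monotonicity of the realization of $u\mapsto-\Delta F(t,\omega,u)$ in $L^2(\Omega\times(0,T);H^{-1})$. The Proposition preceding (\ref{4.10}) gives this only for $G^{(t,\omega)}_\epsilon$ with $(t,\omega)$ and $\epsilon>0$ fixed; its proof uses $\epsilon>0$ for the superlinear growth of $j$ and for $D(\gamma)=\mathbb{R}$. Maximality does not survive the limit $\epsilon\to0$ for free. Lifting to the Bochner space would also need a measurable, square-integrable solution of $u-\Delta F(t,\omega,u)=h$, which you do not supply. The $\limsup$ inequality, by contrast, is not the issue: $J_\epsilon(X_\epsilon)\to\bar X$ strongly and $A_\epsilon(X_\epsilon)\rightharpoonup-\Delta\eta$ weakly in $L^2(\Omega\times(0,T);H^{-1})$, so the pairing simply converges.

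The missing idea, which is how the paper proceeds, is to keep the nonnegative term you discard in Step 1. Since $\langle(F+\epsilon I)(a)-(F+\epsilon I)(b),a-b\rangle_2\geq\langle F(a)-F(b),a-b\rangle_2$, the same It\^o--Gronwall argument gives
\begin{equation*}
\mathbb{E}\int_0^T\langle F(J_\epsilon(X_\epsilon))-F(J_\eta(X_\eta)),J_\epsilon(X_\epsilon)-J_\eta(X_\eta)\rangle_2\,dt\leq C(\epsilon+\eta).
\end{equation*}
Because $F(t,\omega,\xi,\cdot)$ is nondecreasing and $K$-Lipschitz, pointwise $|F(a)-F(b)|^2\leq K(F(a)-F(b))(a-b)$. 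Hence $F(J_\epsilon(X_\epsilon))$ is Cauchy, thus strongly convergent to some $Z$, in $L^2((0,T)\times\Omega\times\mathcal O)$; compare (\ref{4.51})--(\ref{4.52}).

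Meanwhile $J_\epsilon(X_\epsilon)$ is bounded in $L^2((0,T)\times\Omega\times\mathcal O)$ by (\ref{4.15}) and Lemma \ref{lem3.5}, and it converges to $\bar X$ in $L^2(\Omega\times(0,T);H^{-1})$. So $J_\epsilon(X_\epsilon)\rightharpoonup\bar X$ weakly in $L^2$. The Nemytskii operator $u\mapsto F(t,\omega,\xi,u)$ is everywhere defined, continuous and monotone on $L^2((0,T)\times\Omega\times\mathcal O)$. It is therefore maximal monotone and weak--strong closed, which gives $Z=F(\bar X)$.

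This needs strong convergence only of $F(J_\epsilon(X_\epsilon))$, not of $J_\epsilon(X_\epsilon)$. The $(t,\omega)$-dependence through $Y$ is harmless because everything is pointwise in $(t,\omega,\xi)$. With $\eta=F(\bar X)$ identified, your passage to the limit in the equation goes through as planned.
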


\begin{proof}

\textbf{Step I.}

For each $x\in L^2$ the sequence ${X_{\epsilon}}$ is Cauchy in $C_{W}([0,T];L^{2}(\Omega;H^{-1}))$.
Let $\epsilon$, $\eta>0$. Using It\^{o}'s formula for $\|X_{\epsilon}(t)-X_{\eta}(t)\|_{-1}^{2}$, yields
\begin{equation}\label{4.40}
\begin{aligned}
d\|X_{\epsilon}(t)-X_{\eta}(t)\|_{-1}^{2} =&-2\langle X_{\epsilon}(t)-X_{\eta}(t),A_{\epsilon}(X_{\epsilon}(t))-A_{\eta}(X_{\eta}(t))\rangle_{-1}dt  \\
&+2\langle X_{\epsilon}(t)-X_{\eta}(t),(\bar{\sigma}(X_{\epsilon}(t))-\bar{\sigma}(X_{\eta}(t))dW(t)\rangle_{-1} \\
&+\|\bar{\sigma}(X_{\epsilon}(t))-\bar{\sigma}(X_{\eta}(t)\|_{\mathcal{L}_{2}(H^{-1})}^{2}dt.
\end{aligned}
\end{equation}
Recalling identity (\ref{4.21}) we obtain that
\begin{equation}\label{4.41}
\begin{aligned}
&d\|X_{\epsilon}(t)-X_{\eta}(t)\|_{-1}^{2}+2\langle(F+\epsilon I)J_{\epsilon}(X_{\epsilon}(t))-(F+\epsilon I)J_\eta(X_\eta(t)),J_\epsilon(X_\epsilon(t))-J_\eta(X_\eta(t))\rangle_2dt \\
=&-2(\epsilon-\eta)\langle J_\eta(X_\eta(t)),J_\epsilon(X_\epsilon(t))-J_\eta(X_\eta(t))\rangle_2dt\\
&-2\langle A_\epsilon(X_\epsilon(t))-A_\eta(X_\eta(t)),\epsilon A_\epsilon(X_\epsilon(t))-\eta A_\eta(X_\eta(t))\rangle_{-1}dt \\
&+2\langle X_{\epsilon}(t)-X_{\eta}(t),(\bar{\sigma}(X_{\epsilon}(t))-\bar{\sigma}(X_{\eta}(t))dW(t)\rangle_{-1} +\|\bar{\sigma}(X_{\epsilon}(t))-\bar{\sigma}(X_{\eta}(t)\|_{\mathcal{L}_{2}(H^{-1})}^{2}dt.
\end{aligned}
\end{equation}
Integrating with respect to time and taking expectation
\begin{equation}\label{4.42}
\begin{aligned}
&\mathbb{E}[\|X_{\epsilon}(t)-X_{\eta}(t)\|_{-1}^{2}]\\
&+2\mathbb{E}\left[\int^{t}_{0} \langle(F+\epsilon I)J_{\epsilon}(X_{\epsilon}(s))-(F+\epsilon I)J_\eta(X_\eta(s)),J_\epsilon(X_\epsilon(s))-J_\eta(X_\eta(s))\rangle_2ds \right] \\
\leq& \mathbb{E}\left[K_1 \int^{t}_{0}\| X_{\epsilon}(s)-X_{\eta}(s)  \|]^2_{-1}ds\right]+2(\epsilon+\eta)\mathbb{E}\left[\int^{t}_{0} |J_{\eta}(X_{\eta}(s))|_{2}(|J_{\epsilon}(X_{\epsilon}(s))|_{2}+|J_{\eta}(X_{\eta}(s))|_{2})ds\right]\\
&+3(\epsilon+\eta)\mathbb{E}\left[\int^{t}_{0}(\| A_{\epsilon}(X_{\epsilon}(s)) \|^2_{-1}+\| A_{\eta}(X_{\eta}(s)) \|^2_{-1})ds\right],
\end{aligned}
\end{equation}
by (\ref{4.15}), (\ref{4.32}) and\ (\ref{4.36}),
\begin{equation}\label{4.43}
\begin{aligned}
&\mathbb{E}\left[\|X_{\epsilon}(t)-X_{\eta}(t)\|_{-1}^{2}\right]\\
&+2\mathbb{E}\left[\int^{t}_{0} \langle(F+\epsilon I)J_{\epsilon}(X_{\epsilon}(t))-(F+\epsilon I)J_\eta(X_\eta(t)),J_\epsilon(X_\epsilon(t))-J_\eta(X_\eta(t))\rangle_2dt \right] \\
\leq& (\epsilon+\eta)(12K+4T+6)(|x|^2_2+3\sigma^2_{2}T+3K^2_2\mathbb{E}\left[\int_{0}^{T} | Y(s) |^2_{2}ds\right]+\mathbb{E}\left[\int^{T}_{0}| f(s) |^2_{2}ds\right]\\
&+2\mathbb{E}\left[  \int_{0}^{T} |\Delta\beta(Y)|^2_2 ds \right]+(2K+1)\mathbb{E}\left[  \int_{0}^{T}|\nabla Y|^2_2 ds \right])e^{(3K_{2}^{2}+4)T}\\
&+4(\epsilon+\eta)\mathbb{E}\left[\int^{T}_{0} (2\epsilon^2|\Delta \beta(Y)|^2_2+\epsilon K|\nabla Y|^2_2) ds\right]+K_{1}\int^{t}_{0}\mathbb{E}\left[\| X_{\epsilon}(s)- X_{\eta}(s) \|^2_{-1}\right]ds.
\end{aligned}
\end{equation}
Take $\epsilon,\ \eta$ sufficiently small, then by Gronwall lemma
\begin{equation}\label{4.44}
\begin{aligned}
&\mathbb{E}[\|X_{\epsilon}(t)-X_{\eta}(t)\|_{-1}^{2}]\\
&+2\mathbb{E}\left[\int^{t}_{0} \langle(F+\epsilon I)J_{\epsilon}(X_{\epsilon}(t))-(F+\epsilon I)J_\eta(X_\eta(t)),J_\epsilon(X_\epsilon(t))-J_\eta(X_\eta(t))\rangle_2dt \right] \\
\leq& (\epsilon+\eta)(12K+4T+6)(|x|^2_2+3\sigma^2_{2}T+3K^2_2\mathbb{E}\left[\int_{0}^{T} | Y(s) |^2_{2}ds\right]+\mathbb{E}\left[\int^{T}_{0}| f(s) |^2_{2}ds\right]\\
&+3\mathbb{E}\left[  \int_{0}^{T} |\Delta\beta(Y)|^2_2 ds \right]+(2K+2)\mathbb{E}\left[  \int_{0}^{T}|\nabla Y|^2_2 ds \right])e^{(3K_{2}^{2}+4)T+K_1t}.
\end{aligned}
\end{equation}
Therefore, the sequence ${X_{\epsilon}}$ is Cauchy in $C_W([0,T];L^2(\Omega;H^{-1}))$ as claimed.\\

\textbf{Step II.}

For each $x\in L^2$ the sequence ${X_{\epsilon}}$ is Cauchy in $L_{W}^{2}(\Omega;C([0,T];H^{-1}))$.
We first note that by (\ref{4.32}), (\ref{4.36}) and (\ref{4.41}), same as Step 1, we deduce\\
\begin{equation}\label{4.45}
\begin{aligned}
&\mathbb{E}[\sup\limits_{t\in[0,T]}\|X_\epsilon(t)-X_\eta(t)\|_{-1}^2]\\
\leq& (\epsilon+\eta)(24K+8T+12)(|x|^2_2+3\sigma^2_{2}T+3K^2_2\mathbb{E}\left[\int_{0}^{T} | Y(s) |^2_{2}ds\right]+\mathbb{E}\left[\int^{T}_{0}| f(s) |^2_{2}ds\right]\\
&+3\mathbb{E}\left[  \int_{0}^{T} |\Delta\beta(Y)|^2_2 ds \right]+(2K+2)\mathbb{E}\left[  \int_{0}^{T}|\nabla Y|^2_2 ds \right])e^{(3K_{2}^{2}+2K_1+4c^2_1K^2_1+4)T}.
\end{aligned}
\end{equation}
By using Burkholder-Davis-Gundy inequality
\begin{equation}\label{4.46}
\begin{aligned}
&\mathbb{E}\left[\sup_{t\in[0,T]}\int_{0}^{t}\langle X_{\epsilon}(s)-X_{\eta}(s),(\bar{\sigma}(X_{\epsilon}(s))-\bar{\sigma}(X_{\eta}(s))dW(s)\rangle_{-1}\right]\\
\leq& c_{1}\mathbb{E}\left[\left(\int_{0}^{T}|(\bar{\sigma}(X_{\epsilon}(s))-\bar{\sigma}(X_{\eta}(s)))^{*}(X_{\epsilon}(s)-X_{\eta}(s))|_{-1}^{2}ds\right)^{1/2}\right]  \\
\leq& c_1K_1 \mathbb{E}\left[\left(\int_0^T\|X_\epsilon(s)-X_\eta(s)\|_{-1}^4ds\right)^{1/2}\right] \\
\leq& c_1K_1 \mathbb{E}\left[\left(\sup_{t\in[0,T]}\|X_\epsilon(t)-X_\eta(t)\|_{-1}^2\int_0^T\|X_\epsilon(s)-X_\eta(s)\|_{-1}^2)ds\right)^{1/2}\right] \\
\leq&\frac{1}{4}\mathbb{E}\left[\sup_{t\in[0,T]}\|X_{\epsilon}(t)-X_{\eta}(t)\|_{-1}^{2}\right]+c_{1}^{2}K_{1}^{2}\mathbb{E}\left[\int_{0}^{T}\|X_{\epsilon}(s)-X_{\eta}(s)\|_{-1}^{2})ds\right],
\end{aligned}
\end{equation}
and by Gronwall lemma in (\ref{4.45}) we obtain that
\begin{equation}\label{4.47}
\begin{aligned}
&\mathbb{E}\left[\sup\limits_{t\in[0,T]}\|X_\epsilon(t)-X_\eta(t)\|_{-1}^2\right]\\
\leq& (\epsilon+\eta)(24K+8T+12)(|x|^2_2+3\sigma^2_{2}T+3K^2_2\mathbb{E}\left[\int_{0}^{T} | Y(s) |^2_{2}ds\right]+\mathbb{E}\left[\int^{T}_{0}| f(s) |^2_{2}ds\right]\\
&+3\mathbb{E}\left[  \int_{0}^{T} |\Delta\beta(Y)|^2_2 ds \right]+(2K+2)\mathbb{E}\left[  \int_{0}^{T}|\nabla Y|^2_2 ds \right])e^{(3K_{2}^{2}+2K_1+4c^2_1K^2_1+4)T}.
\end{aligned}
\end{equation}
\textbf{Step III.}

Existence of a strong solution.

Since $\{X_{\epsilon}\}$ is Cauchy in $L_W^2(\Omega;C([0,T];H^{-1}))$ there exists $X\in L_{W}^{2}(\Omega;C([0,T];H^{-1}))$ such that
\begin{equation}\label{4.48}
\lim_{\epsilon\to0}X_\epsilon=\bar{X}\quad\text{in}\ L_W^2(\Omega;C([0,T];H^{-1})).
\end{equation}
Because of $||x_{\epsilon}(t)-J_{\epsilon}(x_{\epsilon}t)||_{-1}=\epsilon||A_{\epsilon}(x_{\epsilon}(t))||_{-1}$, we have
\begin{equation}\label{4.49}
\lim\limits_{\epsilon\to0}J_\epsilon(X_\epsilon)=\bar{X}\quad\text{in}\ L^2(0,T;L^2(\Omega;H^{-1})).
\end{equation}
By using (\ref{4.36})
\begin{equation}\label{4.50}
\begin{aligned}
&\mathbb{E}\left[\int_{0}^{T}\|X_{\epsilon}(t)-J_{\epsilon}(X_{\epsilon}(t))\|_{-1}^{2}dt\right]\\
\leq&\epsilon^2 (2K+1)(|x|^2_2+3\sigma^2_{2}T+3K^2_2\mathbb{E}\left[\int_{0}^{T} | Y(s) |^2_{2}ds\right]+\mathbb{E}\left[\int^{T}_{0}| f(s) |^2_{2}ds\right]\\
&+2\mathbb{E}\left[  \int_{0}^{T} |\Delta\beta(Y)|^2_2 ds \right]+(2K+1)\mathbb{E}\left[  \int_{0}^{T}|\nabla Y|^2_2 ds \right])e^{(3K_{2}^{2}+4)T}.
\end{aligned}
\end{equation}
Since $F$ is Lipschitzan and therefore $F^{-1}$ is strongly monotone, it follows that
\begin{equation}\label{4.51}
\begin{aligned}
&\mathbb{E}\left[\int^{t}_{0}\| F(J_{\epsilon}(X_{\epsilon}))-F(J_{\eta}(X_{\eta}))\|^2_{2}dt\right]\\
\leq& K\mathbb{E}\left[\int^{t}_{0}\langle F(J_{\epsilon}(X_{\epsilon}))-F(J_{\eta}(X_{\eta})) , J_{\epsilon}(X_{\epsilon})-J_{\eta}(X_{\eta})\rangle_2 dt\right]\\
\leq& K(\epsilon+\eta)(12K+4T+6)(|x|^2_2+3\sigma^2_{2}T+3K^2_2\mathbb{E}\left[\int_{0}^{T} | Y(s) |^2_{2}ds\right]+\mathbb{E}\left[\int^{T}_{0}| f(s) |^2_{2}ds\right]\\
&+3\mathbb{E}\left[  \int_{0}^{T} |\Delta\beta(Y)|^2_2 ds \right]+(2K+2)\mathbb{E}\left[  \int_{0}^{T}|\nabla Y|^2_2 ds \right])e^{(3K_{2}^{2}+4)T+K_1t}.
\end{aligned}
\end{equation}
So ${F(J_{\epsilon}(X_{\epsilon}))}$ is Cauchy in $L^{2}((0,T)\times \Omega \times \mathcal{O})$. Therefore there exists $Z \in L^{2}((0,T)\times\Omega\times\mathcal{O})$ such that
\begin{equation}\label{4.52}
\lim\limits_{\epsilon\to0}F(J_\epsilon(X_\epsilon))=Z\quad\text{in}\ L^2((0,T)\times\Omega\times\mathcal{O}).
\end{equation}
On the other hand, by estimate (\ref{4.32}) we know that  is bounded in $L^2((0,T)\times\Omega\times\mathcal{O})$ and therefore along a subsequence, again noted ${\epsilon}$, we have
\begin{equation}\label{4.53}
\lim_{\epsilon\to0}X_\epsilon=\bar{X}\quad\text{weakly in}\ L^2(\Omega\times(0,T)\times\mathcal{O}).
\end{equation}
Since
\begin{equation}\label{4.54}
|J_\epsilon(X_\epsilon(t))|_2\leq|X_\epsilon(t)|_2+\epsilon |\Delta\beta(Y)|_2+\sqrt{\frac{\epsilon K}{2}}\|Y\|_{H^1},\quad\forall t\in[0,T],
\end{equation}
from (\ref{4.51}) and (\ref{4.54})
\begin{equation}\label{4.55}
\lim\limits_{\epsilon\to0}J_\epsilon(X_\epsilon)=\bar{X}\quad\text{weakly in}\ L^2(\Omega\times(0,T)\times\mathcal{O}).
\end{equation}
Since the map $y \rightarrow F(y)$ is maximal monotone on $L^{2}(\Omega\times(0,T)\times\mathcal{O})$, it is weakly-strongly closed and so $Z=F(X)$ in $\Omega\times(0,T)\times\mathcal{O}$.\\
Finally, since $\{(\beta+\epsilon I)(J_{\epsilon}(X_{\epsilon}))\}$ is bounded in $L^2([0,T];L^2(\Omega,H_0^1))$ (because $A_{\epsilon}(X_{\epsilon})=\Delta(\beta+\epsilon I) (J_{\epsilon}(X_{\epsilon}))$ is bounded in $L^{2}([0,T];L^{2}(\Omega,H^{-1}))$), we can conclude that there exists a sequence ${\epsilon_{k}}$ convergent to 0 such that
\begin{equation}\label{4.56}
\lim\limits_{k\to\infty}(F+\epsilon_kI)(J_{\epsilon_k}(X_{\epsilon_k}))\to Z=F(\bar{X})\quad\text{weakly in}\ L^2([0,T]\times\Omega;H_0^1).
\end{equation}
So, letting $k\rightarrow \infty$ in the identity
\begin{equation}\label{4.57}
X_{\epsilon_k}(t)=x+\int_0^t\Delta(F+\epsilon_kI)(J_{\epsilon_k}(X_{\epsilon_k}(s)))ds+\int^{t}_{0}f(t)dt+\bar{\sigma}(X_{\epsilon_k}(t))dW(t).
\end{equation}
We get that $\bar{X}$ is a strong solution to (\ref{3.8}). \\

\textbf{Step IV.}

Let $x\in H^{-1}$ and let $x_{n}$ be a sequence in $L^{2}$ convergent to $x$ in $H^{-1}$.

Denote by $X_{n}$ the strong solution to (\ref{3.8}) with $x_{n}$ replacing $x$. Then for any $m,\ n\in \mathbb{N}$ we have by It\^{o}'s formula
\begin{equation}\label{4.58}
\begin{aligned}
d\|X_{m}(t) -X_{n}(t))\|_{-1}^{2}=&2\langle \Delta F(X_{m}(t))-\Delta F(X_{n}(t)),X_{m}(t)-X_{n}(t))\rangle_{-1}dt  \\
&+2\langle X_m(t)-X_n(t),(\bar{\sigma}(X_m(t))-\bar{\sigma}(X_n(t)))dW(t)\rangle_{-1} \\
&+\|\bar{\sigma}(X_{m}(t))-\bar{\sigma}(X_{n}(t))\|_{\mathcal{L}_{2}(H^{-1})}^{2}dt.
\end{aligned}
\end{equation}
Because of $\langle \Delta F(X_{m}(t))-\Delta F(X_{n}(t)),X_{m}(t)-X_{n}(t)) \rangle\leq 0$, integrating with respect to t
\begin{equation}\label{4.59}
\begin{aligned}
\|X_{m}(t)-X_{n}(t))\|_{-1}^{2}\leq&2\int_{0}^{t}\langle X_{m}(s)-X_{n}(s),(\bar{\sigma}(X_{m}(s))-\bar{\sigma}(X_{n}(s)))dW(s)\rangle_{-1}\\
&+\|x_{n}-x_{m}\|_{-1}^{2}+\int_{0}^{t}\|\bar{\sigma}(X_{m}(s))-\bar{\sigma}(X_{n}(s))\|_{\mathcal{L}_{2}(H^{-1})}^{2}ds.
\end{aligned}
\end{equation}
we take the supremum on $t$ and take expectation
\begin{equation}\label{4.60}
\begin{aligned}
&\mathbb{E}\left[\sup_{t\in[0,T]}\|X_{m}(t)-X_{n}(t))\|_{-1}^{2}\right]\\ \leq&\|x_{n}-x_{m}\|_{-1}^{2}+K_1\mathbb{E}\left[\int_0^T\|X_m(s))-X_n(s))\|_{-1}^2ds\right]\\
&+2\mathbb{E}\left[\sup_{t\in[0,T]}\int_0^T\langle X_m(s)-X_n(s),(\bar{\sigma}(X_m(s))-\bar{\sigma}(X_n(s)))dW(s)\rangle_{-1}\right].
\end{aligned}
\end{equation}
 Using the same way as before (\ref{4.29}), we can conclude that the sequence $\{X_{n}\}_{n\geq 1 }$ is Cauchy in\\ $L_W^2(\Omega;C([0,T];H^{-1}))$ and consequently it is convergent to a generalized solution of (\ref{4.1}). Then we consider take the solution of (\ref{3.8}) back to (\ref{3.1}). Because of the assumption of $Y$, we have
\begin{equation}\label{4.61}
X(t)=x+\int_0^t\Delta\beta(X(s))ds+\int_0^t\sigma(X(s))dW(s),\quad\forall t\in[0,T],\ \mathbb{P}\text{-a.s.},
\end{equation}
\begin{equation}\label{4.62}
X(t)=\bar{X}(t)+Y(t)\in L_W^2(\Omega;C([0,T];H^{-1}))\cap L^2(0,T;L^2(\Omega;L^2)),
\end{equation}
\begin{equation}\label{4.63}
\begin{aligned}
||\beta(X)||_{2}=||\beta(\bar{X}+Y)||_{2}&\leq|| \beta(\bar{X})||_{2}+||\beta(\bar{X}+Y)-\beta(\bar{X})||_{2}\\
&\leq || \beta(\bar{X})||_{2}+K||Y||_{2},
\end{aligned}
\end{equation}
from previous estimates and assumptions easy to get $\beta(X)\in L^2(0,T;L^2(\Omega;H^1))$.\\
So $X(t)$ is the strong solution of (\ref{3.1}).
When $x\in H^{-1}$ by using Step IV, we have that equation (\ref{3.1}) have a generalized solution.
\end{proof}

\noindent \textbf{Acknowledgements}
D.G. aknowledges financial support from National Sciences and Engineering Research Council (NSERC), Canada,
Grant/Award Number: RGPIN-2025-03963.

\end{document}